\newtheorem{theorem}{Theorem}[section]
\newtheorem{conjecture}[theorem]{Conjecture}
\newtheorem{lemma}[theorem]{Lemma}
\newtheorem{proposition}[theorem]{Proposition}
\newtheorem{corollary}[theorem]{Corollary}
\newcommand{\RR}{\mathbb{R}}
\newcommand{\CC}{\mathbb{C}}
\newcommand{\KK}{\mathbb{K}}
\newcommand{\NN}{\mathbb{N}}
\newcommand{\ZZ}{\mathbb{Z}}
\newcommand{\vvv}{|\!|\!|}
\newcommand{\bigvvv}{\Big|\!\Big|\!\Big|}
\newcommand{\Mat}{\mathrm{Mat}}
\begin{document}
\title{Criteria for the stability of the finiteness property and for the uniqueness of Barabanov norms\footnote{This research was funded by EPSRC grant EP/E020801/01.}}\author{Ian D. Morris}
\maketitle

\begin{abstract}
A set of matrices is said to have the \emph{finiteness property} if the maximal rate of exponential growth of long products of matrices drawn from that set is realised by a periodic product. The extent to which the finiteness property is prevalent among finite sets of matrices is the subject of ongoing research. In this article we give a condition on a finite irreducible set of matrices which guarantees that the finiteness property holds not only for that set, but also for all sufficiently nearby sets of equal cardinality. We also prove a theorem giving conditions under which the Barabanov norm associated to a finite irreducible set of matrices is unique up to multiplication by a scalar, and show that in certain cases these conditions are also persistent under small perturbations.
\\
\\
MSC-2010 codes: 15A18, 15A60.
\end{abstract}

\section{Introduction}
Let $\mathcal{A}$ be a bounded set of $d \times d$ real or complex matrices. The joint spectral radius of $\mathcal{A}$, introduced by G.-C. Rota and W. G. Strang in \cite{RS}, is defined to be the quantity
\[\varrho(\mathcal{A}) = \lim_{n \to \infty} \sup\left\{\|A_{i_n}\cdots A_{i_1}\|^{1/n} \colon A_i \in \mathcal{A}\right\},\]
which may easily be shown to yield a finite value which is independent of the choice of norm $\|\cdot\|$. The joint spectral radius has been the subject of substantial recent research interest, which has dealt with its applications \cite{Gu,Koz,MO,DL0,Mae2,GZ}, with issues of its computation and approximation \cite{Koz,Mae2,Koz2,BN,BW,Gr,LW,MQBWF,Prot,BT1}, and with the study of its mathematical properties \cite{Ba,GWZ,Wirth1,Wirth2}. In this article we investigate the \emph{finiteness property}, a property of sets of matrices $\mathcal{A}$ which facilitates the computation of $\varrho(\mathcal{A})$, and the \emph{Barabanov norm}, a theoretical construction associated to a set of matrices $\mathcal{A}$ whose applications have been explored in \cite{Koz2,Ba,Wirth1,Koz3}.
  
Let us begin by establishing some notation. Throughout this article we shall use the symbol $\KK$ to stand for either $\RR$ or $\CC$. Statements which are expressed using $\KK$ should thus be understood as being valid in both the case $\KK=\RR$ and the case $\KK=\CC$. For $d \in \NN$ we let $\Mat_d(\KK)$ denote the ring of $d \times d$ matrices over $\KK$, and let $\vvv \cdot \vvv$ denote the Euclidean norm on $\KK^d$. To simplify the statement of some of our results we shall find it convenient to work primarily with \emph{ordered} sets of elements of $\Mat_d(\KK)$. Thus, for each pair of integers $r,d \in \NN$ we let $\mathcal{O}_r(\KK^d)$ denote the set of all ordered $r$-tuples of elements of $\Mat_d(\KK)$. We equip each $\mathcal{O}_r(\KK^d)$ with the metric
\[d_{\mathcal{O}_r}\left[(A_1,\ldots,A_r),(B_1,\ldots,B_r)\right]:=\max_{1 \leq i \leq r}\vvv A_i-B_i\vvv.\]

An $r$-tuple $\mathcal{A}=(A_1,\ldots,A_r) \in \mathcal{O}_r(\KK^d)$ is said to have the \emph{finiteness property} if there exist $n \in \NN$ and  $(i_1,\ldots,i_n)\in\{1,\ldots,r\}^n$ such that $\rho(A_{i_n}\cdots A_{i_1})^{1/n} =\varrho(\mathcal{A})$. The finiteness property was introduced by J. Lagarias and Y. Wang, who conjectured in \cite{LW} that it is satisfied by every finite set of matrices. Theoretical results giving various preconditions for the finiteness property were established in \cite{LW,Gu2}. The existence of pairs  of matrices lacking the finiteness property was later demonstrated by T. Bousch and J. Mairesse in \cite{BM} (see also \cite{Koz3,BTV}). Exceptions to the finiteness property nonetheless seem to be rare: M. Maesumi conjectures in \cite{Mae} that the finiteness property holds for almost all finite sets of matrices in the sense of Lebesgue measure on $\mathcal{O}_r(\KK^d)$, and in a similar vein V. Blondel and R. Jungers conjecture in \cite{BJ} that the finiteness property holds for every finite set of rational matrices. In the first  theorem in this article we contribute to the study of the finiteness property by showing that for each $r,d \geq 2$, an open set of $r$-tuples of matrices in $\mathcal{O}_r(\KK^d)$ has the finiteness property.

We shall call an $r$-tuple $\mathcal{A} =(A_1,\ldots,A_r)\in \mathcal{O}_r(\KK^d)$ \emph{reducible} if there exists a linear space $V \subset \KK^d$, not equal to $\{0\}$ or $\mathbb{K}^d$, which is preserved by every $A_i$. Otherwise $\mathcal{A}$ shall be called \emph{irreducible}. Reducibility of $\mathcal{A}$ is equivalent to the existence of nonzero vectors $u,v \in \KK^d$ such that for each $n \in \NN$, $\left<A_{i_n}\cdots A_{i_1}u,v\right>=0$ for every $(i_1,\ldots,i_n)\in\{1,\ldots,r\}^n$. We let $\mathcal{I}_r(\KK^d)$ denote the set of irreducible elements of $\mathcal{O}_r(\KK^d)$. It is straightforward to show that $\mathcal{I}_r(\KK^d)$ is open and dense in $\mathcal{O}_r(\KK^d)$. 

A norm $\|\cdot\|$ on $\KK^d$ will be called \emph{extremal} for $\mathcal{A}=(A_1,\ldots,A_r)\in\mathcal{O}_r(\KK^d)$ if $\|A_i\| \leq \varrho(\mathcal{A})$ for every $i$. We shall say that $\|\cdot\|$ is a \emph{Barabanov norm} for $\mathcal{A}$ if the equation
\begin{equation}\label{Bnorm}
\varrho(\mathcal{A})\|v\| = \max_{1 \leq i \leq r}\|Av\|
\end{equation}
is satisfied for every $v \in \KK^d$, which in particular implies that $\|\cdot\|$ is extremal. If $\mathcal{A}$ is irreducible then $\varrho(\mathcal{A})$ is nonzero and a Barabanov norm for $\mathcal{A}$ exists \cite{Ba,Wirth1}. Clearly $\|\cdot\|$ is a Barabanov norm for $\mathcal{A}$ if and only if every norm proportional to $\|\cdot\|$ is also a Barabanov norm, so we shall simply say that $\mathcal{A}$ has a `unique' Barabanov norm to mean that all of the Barabanov norms for $\mathcal{A}$ are proportional to one another.
In the second theorem in this article we shall give a sufficient condition for the Barabanov norm of $\mathcal{A}$ to be unique.

To state our first theorem we require some further definitions. For each $r,n \in \NN$ we let $\Omega^n_r$ denote the set $\{1,\ldots,r\}^n$ , which we refer to as the set of \emph{words} of length $n$ over the alphabet $\{1,\ldots,r\}$. We shall say that two words $z=(z_1,\ldots,z_n),\omega=(\omega_1,\ldots, \omega_n) \in \Omega^n_r$ are \emph{rotation equivalent}, and write $z \sim \omega$, if there exists $k$ such that $(z_1,\ldots,z_n)= (\omega_{k+1},\ldots,\omega_n,\omega_1,\ldots,\omega_k)$. We write $\omega = z^p$, and say that $\omega$ is a power of $z$, if $\omega \in\Omega^{np}_r$ consists of $p$ repetitions of the word $z \in \Omega^n_r$. Clearly $\sim$ defines an equivalence relation on each $\Omega^n_r$, and if $(A_1,\ldots,A_n) \in \mathcal{O}_r(\KK^d)$ and $(z_1,\ldots,z_n)\sim (\omega_1,\ldots,\omega_n)$ then $\rho(A_{z_n}\cdots A_{z_1})=\rho(A_{\omega_n}\dots A_{\omega_1})$. In the terminology of \cite{BTV}, $z\sim \omega$ if and only if $z$ and $\omega$ have the same length and are `essentially equal'.

If $\mathcal{A} \in \mathcal{O}_r(\KK^d)$ and $\|\cdot\|$ is any matrix norm, then by a theorem of M. A. Berger and Y. Wang \cite{BW},
\begin{equation}\label{ZOMGBWF}\varrho(\mathcal{A}) = \inf_{n \in \NN} \max_{(i_1,\ldots,i_n)\in\Omega_r^n}\|A_{i_n} \cdots A_{i_1}\|^{1/n} = \sup_{n \in \NN} \max_{(i_1,\ldots,i_n)\in\Omega_r^n}\rho(A_{i_n} \cdots A_{i_1})^{1/n},\end{equation}
which in particular implies that $\varrho \colon \mathcal{O}_r(\KK^d) \to \RR$ is continuous for every $r,d \in \NN$.

Let us say that $\mathcal{A}=(A_1,\ldots,A_r) \in \mathcal{I}_r(\KK^d)$ satisfies the \emph{strong finiteness hypothesis} if there exists a word $\omega = (\omega_1,\ldots,\omega_n) \in \Omega_r^n$, which we call a \emph{characteristic word} for $\mathcal{A}$, such that for every Barabanov norm $\|\cdot\|_B$ for $\mathcal{A}$, we have $\|A_{i_n}\cdots A_{i_1}\|_B<\varrho(\mathcal{A})^n$ whenever $(i_1,\ldots,i_n) \nsim (\omega_1,\ldots,\omega_n)$. This condition implies that $\|(A_{\omega_n}\cdots A_{\omega_1})^m\|_B = \varrho(\mathcal{A})^{nm}$ for every $m \in \NN$, since if this were to fail for some $m$ then we would have
\[\max \left\{\left\|A_{i_{(m+1)n}}\cdots A_{i_1}\right\|_B \colon \left(i_1,\ldots,i_{(m+1)n}\right) \in \Omega^{(m+1)n}_r\right\}<\varrho(\mathcal{A})^{(m+1)n}\]
contradicting \eqref{ZOMGBWF}. It follows that if $\mathcal{A}$ satisfies the strong finiteness hypothesis with characteristic word $(\omega_1,\ldots,\omega_n)$ then it satisfies the finiteness property with $\varrho(\mathcal{A})=\rho(A_{\omega_n}\cdots A_{\omega_1})^{1/n}$. The reader may note that if $\omega \in \Omega^n_r$ is a characteristic word for $\mathcal{A}$, then $\hat\omega \in \Omega^n_r$ is a characteristic word if and only if $\hat\omega \sim \omega$, and for every $p \in \NN$, $\omega^p \in \Omega^{np}_r$ is also a characteristic word for $\mathcal{A}$.
The strong finiteness hypothesis has the following important property:
\begin{theorem}\label{FIST}
Let $r,d \in \NN$ and $\omega \in \Omega^n_r$. Then the set of all $\mathcal{A}\in \mathcal{I}_r(\KK^d)$ such that $\mathcal{A}$ satisfies the strong finiteness hypothesis with characteristic word $\omega$ is an open subset of $\mathcal{I}_r(\KK^d)$.\end{theorem}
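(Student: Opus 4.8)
The plan is to argue by contradiction via a compactness argument on the space of normalized Barabanov norms. Suppose the set in question fails to be open in $\mathcal{I}_r(\KK^d)$. Then there is a tuple $\mathcal{A}$ satisfying the strong finiteness hypothesis with characteristic word $\omega=(\omega_1,\ldots,\omega_n)$, together with a sequence $\mathcal{A}^{(k)}=(A^{(k)}_1,\ldots,A^{(k)}_r)\in\mathcal{I}_r(\KK^d)$ converging to $\mathcal{A}$, none of which satisfies the hypothesis with $\omega$. For each $k$ this failure supplies a word $z^{(k)}\nsim\omega$ and a Barabanov norm $N_k$ for $\mathcal{A}^{(k)}$ with $\|A^{(k)}_{z^{(k)}_n}\cdots A^{(k)}_{z^{(k)}_1}\|_{N_k}\ge\varrho(\mathcal{A}^{(k)})^n$, where $\|\cdot\|_{N_k}$ is the operator norm induced by $N_k$. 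Since $\Omega^n_r$ is finite, after passing to a subsequence I may assume $z^{(k)}=z$ is a single word with $z\nsim\omega$. I would then rescale each $N_k$ so that $\max_{\vvv v\vvv=1}N_k(v)=1$, which forces $N_k\le\vvv\cdot\vvv$; this preserves both the Barabanov property and the scale-invariant defect inequality above.

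The heart of the argument, and the step I expect to be the main obstacle, is to show that the normalized norms are uniformly nondegenerate: there is $c>0$ with $N_k\ge c\vvv\cdot\vvv$ for all large $k$. Here I would use that irreducibility is quantitatively robust. Because $\mathcal{A}$ is irreducible, there is an integer $n_0$ with $\mathrm{span}\{A_w v:\ |w|\le n_0\}=\KK^d$ for every nonzero $v$, and by a compactness argument over the Euclidean unit sphere this spanning property persists, with the same $n_0$, for all $\mathcal{B}$ in a neighborhood $U$ of $\mathcal{A}$ and all nonzero $v$. Fix $k$ large enough that $\mathcal{A}^{(k)}\in U$, write $B_w$ for the associated products, and let $v^\ast_k$ be a unit vector with $N_k(v^\ast_k)=1$. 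Given any unit vector $u$, I would select among $\{B_w u:\ |w|\le n_0\}$ a basis of $\KK^d$ and expand $v^\ast_k=\sum_j a_j B_{w_j}u$; extremality gives $N_k(B_{w_j}u)\le N_k(u)$, so $1=N_k(v^\ast_k)\le(\sum_j|a_j|)\,N_k(u)$. The sum $\sum_j|a_j|$ is controlled by the operator norm of the inverse of the chosen basis matrix, and the infimum of this quantity over admissible bases stays bounded as $(\mathcal{B},u)$ ranges over the compact set (the closure of $U$) $\times$ (the unit sphere), giving the uniform bound $N_k(u)\ge c$. The delicate point is exactly the uniform extraction of a well-conditioned basis.

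With uniform two-sided bounds $c\vvv\cdot\vvv\le N_k\le\vvv\cdot\vvv$ secured, the restrictions of the $N_k$ to the Euclidean unit sphere are uniformly bounded and $1$-Lipschitz, so by the Arzel\`a--Ascoli theorem a further subsequence converges uniformly on the sphere; the limit extends by homogeneity to a function $N_\infty$ which, thanks to the lower bound, is a genuine norm with $c\vvv\cdot\vvv\le N_\infty\le\vvv\cdot\vvv$. I would then pass to the limit in the Barabanov equation $\varrho(\mathcal{A}^{(k)})N_k(v)=\max_i N_k(A^{(k)}_i v)$. Using $\mathcal{A}^{(k)}\to\mathcal{A}$, the continuity of $\varrho$ recorded after \eqref{ZOMGBWF}, the uniform convergence $N_k\to N_\infty$, and the equi-Lipschitz property of the $N_k$, each term converges and one obtains $\varrho(\mathcal{A})N_\infty(v)=\max_i N_\infty(A_i v)$ for all $v$; hence $N_\infty$ is a Barabanov norm for $\mathcal{A}$.

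Finally I would pass to the limit in the defect inequality. Since $A^{(k)}_{z_n}\cdots A^{(k)}_{z_1}\to A_{z_n}\cdots A_{z_1}$, and the relevant operator norms are suprema over the compact Euclidean sphere of integrands $N_k(M_k v)/N_k(v)$ that converge uniformly and stay uniformly bounded below, the numbers $\|A^{(k)}_{z_n}\cdots A^{(k)}_{z_1}\|_{N_k}$ converge to $\|A_{z_n}\cdots A_{z_1}\|_{N_\infty}$. The inequality $\|A^{(k)}_{z_n}\cdots A^{(k)}_{z_1}\|_{N_k}\ge\varrho(\mathcal{A}^{(k)})^n$ therefore yields $\|A_{z_n}\cdots A_{z_1}\|_{N_\infty}\ge\varrho(\mathcal{A})^n$. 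But $N_\infty$ is a Barabanov norm for $\mathcal{A}$ and $z\nsim\omega$, so the strong finiteness hypothesis for $\mathcal{A}$ forces $\|A_{z_n}\cdots A_{z_1}\|_{N_\infty}<\varrho(\mathcal{A})^n$, a contradiction. This contradiction shows that the set of $\mathcal{A}\in\mathcal{I}_r(\KK^d)$ satisfying the strong finiteness hypothesis with characteristic word $\omega$ is open in $\mathcal{I}_r(\KK^d)$.
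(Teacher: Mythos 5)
Your proof is correct, and its skeleton is the same as the paper's: both arguments come down to extracting, from a sequence $\mathcal{A}^{(k)}\to\mathcal{A}$ with Barabanov norms witnessing failure of the strict inequality for some fixed $z\nsim\omega$ of length $n$, a limit norm that is Barabanov for $\mathcal{A}$ and inherits the non-strict inequality, contradicting the strong finiteness hypothesis. The paper packages this as upper semicontinuity of the function $\Theta_\omega(\mathcal{A})=\sup\{\|A_{\omega_n}\cdots A_{\omega_1}\|^{1/n}:\|\cdot\|$ Barabanov for $\mathcal{A}\}$ (showing the supremum is attained, and then that the set where $\max_{\omega\nsim\hat\omega}\Theta_\omega(\mathcal{A})<\varrho(\mathcal{A})^n$ is open), which is just the sequential form of your contradiction argument. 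The genuine difference is where the compactness of the family of normalized Barabanov norms comes from: the paper cites Wirth's Theorem 4.1, which asserts that the Barabanov norms associated to a compact family of irreducible tuples have finite diameter in the metric $d_{\mathcal{N}}$, and then applies Arzel\`a--Ascoli (this is the paper's Lemma 3.1). You instead prove the needed uniform two-sided bounds $c\vvv\cdot\vvv\le N_k\le\vvv\cdot\vvv$ from scratch, via a quantitative form of irreducibility: uniform spanning of $\{B_wv:|w|\le n_0\}$ (indeed $n_0=d-1$ always suffices) together with a uniformly well-conditioned basis extraction over the compact set $\overline{U}\times\{\vvv v\vvv=1\}$, which is a standard open-cover argument over finitely many word-selections and is correct. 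This makes your proof self-contained and makes explicit exactly how irreducibility enters, at the cost of length; the paper's route is shorter and its $\Theta_\omega$ formulation isolates a reusable semicontinuity statement. One cosmetic slip: without first normalizing $\varrho(\mathcal{A}^{(k)})=1$, extremality gives $N_k(B_{w_j}u)\le\varrho(\mathcal{A}^{(k)})^{|w_j|}N_k(u)$ rather than $N_k(B_{w_j}u)\le N_k(u)$; since $\varrho$ is continuous and positive near $\mathcal{A}$, the factor $\varrho(\mathcal{A}^{(k)})^{|w_j|}$ is uniformly bounded over $|w_j|\le n_0$ and large $k$, so your constant $c$ simply absorbs it and the argument is unaffected.
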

The strong finiteness hypothesis appears to be problematic to verify, even in cases where $\mathcal{A}$ is otherwise easily analysed. Some illustrative examples are given in the following section. It is tempting to conjecture the following possible generalisation, which would be of broader practical value: 
\begin{conjecture}
Let $r,d \in \NN$ and $\hat \omega \in \Omega^n_r$, and let $\mathcal{U}_{\hat\omega} \subset \mathcal{I}_r(\KK^d)$ be the set of all $r$-tuples $\mathcal{A}=(A_1,\ldots,A_r)$ for which there exists an extremal norm $\|\cdot\|_{\mathcal{A}}$ with the property that $\|A_{\omega_n}\cdots A_{\omega_1}\|_{\mathcal{A}}<\varrho(\mathcal{A})^n$ whenever $\hat\omega \nsim (\omega_1,\ldots,\omega_n)$. Then $\mathcal{U}_{\hat\omega}$ is an open subset of $\mathcal{I}_r(\KK^d)$.
\end{conjecture}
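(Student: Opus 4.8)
\noindent The plan is to argue by contradiction, exploiting a compactness property of the (normalised) extremal norms associated to $r$-tuples near $\mathcal{A}$. Suppose $\mathcal{A}$ satisfies the strong finiteness hypothesis with characteristic word $\omega\in\Omega^n_r$, but that the set in the statement fails to be open at $\mathcal{A}$. Then there is a sequence $\mathcal{B}^{(k)}\to\mathcal{A}$ in $\mathcal{O}_r(\KK^d)$ such that no $\mathcal{B}^{(k)}$ satisfies the strong finiteness hypothesis with word $\omega$; since $\mathcal{I}_r(\KK^d)$ is open we may assume every $\mathcal{B}^{(k)}=(B^{(k)}_1,\ldots,B^{(k)}_r)$ is irreducible. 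By the definition of the hypothesis and the extremality of Barabanov norms, for each $k$ there exist a Barabanov norm $\|\cdot\|_k$ for $\mathcal{B}^{(k)}$ and a word $w^{(k)}\in\Omega^n_r$, not rotation equivalent to $\omega$, such that the product of the matrices $B^{(k)}_i$ indexed by $w^{(k)}$ has operator norm with respect to $\|\cdot\|_k$ equal to $\varrho(\mathcal{B}^{(k)})^n$. As $\Omega^n_r$ is finite we may pass to a subsequence and assume $w^{(k)}=w=(w_1,\ldots,w_n)$ is independent of $k$, with $w\nsim\omega$. The goal is then to extract a limiting Barabanov norm $\|\cdot\|_*$ for $\mathcal{A}$ with $\|A_{w_n}\cdots A_{w_1}\|_*=\varrho(\mathcal{A})^n$, contradicting the strong finiteness hypothesis for $\mathcal{A}$.

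\noindent Normalise each $\|\cdot\|_k$ so that $\max_{\vvv v\vvv=1}\|v\|_k=1$, whence $\|v\|_k\le\vvv v\vvv$ for all $v$. The essential ingredient, which I expect to be the main obstacle, is a matching uniform lower bound: there should exist a neighbourhood $U$ of $\mathcal{A}$ and a constant $c>0$ such that $\|v\|_k\ge c\vvv v\vvv$ for all $v$ whenever $\mathcal{B}^{(k)}\in U$. To obtain this I would first prove a quantitative form of irreducibility. Since $\mathcal{A}$ is irreducible, for each unit vector $v$ the span of $\{A_{u_m}\cdots A_{u_1}v:(u_1,\ldots,u_m)\in\Omega^m_r,\ m\le N\}$ is all of $\KK^d$ once $N$ is large enough; by lower semicontinuity of rank together with compactness of the Euclidean unit sphere one can fix a single $N$, a neighbourhood $U$, and $\sigma>0$ so that for every $\mathcal{B}\in U$ and every unit $v$ there are words of length at most $N$ whose images of $v$ form a frame with smallest singular value at least $\sigma$. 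Fixing a maximiser $v_*$ of $\|\cdot\|_k$ on the Euclidean sphere and expanding it in such a frame, extremality of $\|\cdot\|_k$ (which gives $\|B^{(k)}_{u_m}\cdots B^{(k)}_{u_1}v\|_k\le\varrho(\mathcal{B}^{(k)})^{m}\|v\|_k$, and hence bounds the norm of each frame vector by a uniform multiple of $\|v\|_k$) yields $1=\|v_*\|_k\le C\|v\|_k$ for a constant $C$ depending only on $U$, $N$ and $\sigma$. Here I use that $\varrho$ is continuous and strictly positive on $\mathcal{I}_r(\KK^d)$, so the $\varrho(\mathcal{B}^{(k)})$ remain in a fixed compact subinterval of $(0,\infty)$.

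\noindent With the uniform equivalence $c\vvv v\vvv\le\|v\|_k\le\vvv v\vvv$ established, the unit balls of the $\|\cdot\|_k$ lie in a fixed compact family of convex bodies, so by the Blaschke selection theorem (equivalently, by Arzel\`a--Ascoli applied to the norms restricted to the Euclidean sphere) I may pass to a further subsequence along which $\|\cdot\|_k$ converges uniformly on compact sets to a genuine norm $\|\cdot\|_*$ obeying the same two-sided bound. It then remains to pass two identities to the limit. First, for fixed $v$ and $i$ one has $\|B^{(k)}_i v\|_k\to\|A_i v\|_*$, since $\|\cdot\|_k\to\|\cdot\|_*$ uniformly on compacta and $\|B^{(k)}_i v-A_i v\|_k\le\vvv(B^{(k)}_i-A_i)v\vvv\to 0$; letting $k\to\infty$ in the Barabanov equation $\varrho(\mathcal{B}^{(k)})\|v\|_k=\max_i\|B^{(k)}_i v\|_k$ and invoking continuity of $\varrho$ shows that $\|\cdot\|_*$ is a Barabanov norm for $\mathcal{A}$. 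Second, the same convergence applied to the operator norms induced by the $\|\cdot\|_k$, again controlled by the uniform two-sided bound, gives $\|B^{(k)}_{w_n}\cdots B^{(k)}_{w_1}\|_k\to\|A_{w_n}\cdots A_{w_1}\|_*$, so that $\|A_{w_n}\cdots A_{w_1}\|_*=\lim_k\varrho(\mathcal{B}^{(k)})^n=\varrho(\mathcal{A})^n$ with $w\nsim\omega$. This contradicts the strong finiteness hypothesis for $\mathcal{A}$, completing the argument.
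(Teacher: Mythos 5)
There is a genuine gap, and it sits in your very first sentence: you assume that $\mathcal{A}$ ``satisfies the strong finiteness hypothesis with characteristic word $\omega$'', but membership in $\mathcal{U}_{\hat\omega}$ is a strictly weaker condition --- it requires only the \emph{existence} of some extremal norm with the strict inequalities, whereas the strong finiteness hypothesis demands them for \emph{every Barabanov norm}. Consequently your argument, even granting all of the compactness steps (which are sound: your quantitative-irreducibility bound is in effect a re-derivation of the uniform norm-equivalence that the paper quotes from Wirth's Theorem 4.1, and the limit extraction matches Lemma \ref{wirthlemma}), establishes only that the set of tuples satisfying the strong finiteness hypothesis is open. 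That is Theorem \ref{FIST}, which the paper proves by essentially your route, phrased via upper semicontinuity of the functionals $\Theta_\omega$. It does not establish openness of $\mathcal{U}_{\hat\omega}$ at points of $\mathcal{U}_{\hat\omega}$ which fail the strong finiteness hypothesis, and such points exist: for Example 2 of the paper the Euclidean norm is extremal and satisfies $\vvv A_2 \vvv = |\lambda| < 1 = \varrho(\mathcal{A})$, so that $\mathcal{A} \in \mathcal{U}_{(1)}$ with $\hat\omega = (1) \in \Omega^1_2$, while the paper exhibits a Barabanov norm with $\|A_1^{n-1}A_2\| = 1$ for all $n$, so the strong finiteness hypothesis fails there. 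Your proof never engages with such points.

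Nor can the method be repaired within your framework, and this is exactly why the paper states the result as a \emph{conjecture} rather than a theorem. At a point $\mathcal{A} \in \mathcal{U}_{\hat\omega}$ not satisfying the strong finiteness hypothesis, your concluding contradiction evaporates: the compactness machinery only ever produces a \emph{Barabanov} norm $\|\cdot\|_*$ in the limit, and the conclusion ``$\|A_{w_n}\cdots A_{w_1}\|_* = \varrho(\mathcal{A})^n$ for some Barabanov norm $\|\cdot\|_*$ and some $w \nsim \hat\omega$'' is perfectly compatible with $\mathcal{A} \in \mathcal{U}_{\hat\omega}$, whose witnessing extremal norm may be an entirely different norm (as the Euclidean norm is in Example 2). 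The paper's remark following the conjecture names this precise obstruction: the lack of a natural candidate extremal norm for tuples $\mathcal{B}$ near $\mathcal{A}$. The proof of Theorem \ref{FIST} succeeds exactly because, under the strong finiteness hypothesis, \emph{every} Barabanov norm of a nearby $\mathcal{B}$ serves as a candidate, so a limit of Barabanov norms suffices; for the conjecture one would need to track an arbitrary extremal norm through the limit, and no analogue of Lemma \ref{wirthlemma} or of your uniform two-sided bound is available for the family of all extremal norms. In short, what you have written is a correct-in-outline proof of Theorem \ref{FIST}, but the statement you were asked to prove is open in the paper, and the step from your argument to it is the actual open problem.
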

We remark that the principal obstruction to such a result would appear to be the lack of a natural candidate extremal norm $\|\cdot\|_{\mathcal{B}}$ for $\mathcal{B} \in \mathcal{I}_r(\KK^d)$ close to $\mathcal{A}$. Indeed, the proof of Theorem \ref{FIST} functions by showing that if $\mathcal{A}$ satisfies the strong finiteness hypothesis, then every Barabanov norm for $\mathcal{B}$ is such a candidate norm.

While the strong finiteness hypothesis is quite a strict condition, we are nonetheless able to construct examples in which it is satisfied:
\begin{proposition}\label{TheProposition}
Let $r,n \in \NN$ and $\omega =(\omega_1,\ldots,\omega_n) \in \Omega_r^n$, and suppose that $\omega$ is not equal to a power of a shorter word. Then there exists $\mathcal{A} =(A_1,\ldots,A_r) \in \mathcal{I}_r(\KK^n)$ such that $\mathcal{A}$ satisfies the strong finiteness hypothesis with characteristic word $\omega$, the $A_i$ are pairwise distinct, and $\mathrm{rank}\,\left(A_{\omega_n}\cdots A_{\omega_1}\right)=1$.
\end{proposition}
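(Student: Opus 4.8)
The plan is to build $\mathcal{A}$ explicitly from a family of partial cyclic shifts indexed by the letters of $\omega$, and then to repair the two side conditions (pairwise distinctness and irreducibility) using the openness provided by Theorem~\ref{FIST}. I would work in $\KK^n$ with standard basis $e_1,\ldots,e_n$, read the indices modulo $n$ so that $e_{n+1}=e_1$, and for each letter $s\in\{1,\ldots,r\}$ set $J_s=\{j:\omega_j=s\}$. Define $A_s^{0}$ by $A_s^{0}e_j=e_{j+1}$ when $j\in J_s$ and $A_s^{0}e_j=0$ otherwise, and write $\mathcal{A}_0=(A_1^0,\ldots,A_r^0)$. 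Each $A_s^0$ is a subpermutation matrix, its nonzero columns being distinct basis vectors, so $\vvv A_s^0 v\vvv\le\vvv v\vvv$ for all $v$; hence every product of the $A_s^0$ has Euclidean operator norm at most $1$ and $\varrho(\mathcal{A}_0)\le 1$.

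First I would analyse products of length $n$. Tracking a basis vector $e_k$ through $A^0_{c_n}\cdots A^0_{c_1}$, the image is nonzero only if the path $e_k\to e_{k+1}\to\cdots\to e_{k+n}=e_k$ is never interrupted, which happens precisely when $c_t=\omega_{k+t-1}$ for all $t$, i.e.\ when $(c_1,\ldots,c_n)$ is the cyclic rotation of $\omega$ beginning at position $k$, in which case $A^0_{c_n}\cdots A^0_{c_1}e_k=e_k$. Consequently $A^0_{c_n}\cdots A^0_{c_1}=0$ whenever $(c_1,\ldots,c_n)\nsim\omega$, while for $(c_1,\ldots,c_n)\sim\omega$ the product is the rank-one idempotent $E_{kk}$ attached to the unique starting position $k$. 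Here the hypothesis that $\omega$ is not a power of a shorter word is essential: it forces the $n$ cyclic rotations of $\omega$ to be distinct, so the rotation equal to $\omega$ itself occurs only for $k=1$, giving $A^0_{\omega_n}\cdots A^0_{\omega_1}=E_{11}$, of rank exactly one. (Were $\omega$ a $p$-th power, several positions $k$ would be fixed and the rank would be $p$.) Since $\rho(E_{11})=1$, the lower bound in \eqref{ZOMGBWF} gives $\varrho(\mathcal{A}_0)\ge \rho(A^0_{\omega_n}\cdots A^0_{\omega_1})^{1/n}=1$, so $\varrho(\mathcal{A}_0)=1$; and as the off-word products vanish identically, $\mathcal{A}_0$ satisfies the strong finiteness hypothesis with characteristic word $\omega$ in the strongest possible way, namely $\|A^0_{c_n}\cdots A^0_{c_1}\|_B=0<1=\varrho(\mathcal{A}_0)^n$ for every Barabanov norm $\|\cdot\|_B$ and every $(c_1,\ldots,c_n)\nsim\omega$.

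To see that $\mathcal{A}_0$ is irreducible I would show that the algebra generated by the $A_s^0$ is all of $\Mat_n(\KK)$: starting from $E_{11}=A^0_{\omega_n}\cdots A^0_{\omega_1}$, left-multiplication by the successive $A^0_{\omega_{k-1}},\ldots,A^0_{\omega_1}$ produces every $E_{k1}$, right-multiplication by $A^0_{\omega_n},A^0_{\omega_{n-1}},\ldots$ produces every $E_{1l}$, and hence every $E_{kl}=E_{k1}E_{1l}$, which leaves no proper nonzero invariant subspace. It then remains only to arrange pairwise distinctness. The letters occurring in $\omega$ already give pairwise distinct matrices, since the column supports $\{j+1:j\in J_s\}$ are disjoint and nonempty; the only possible coincidences are among the letters that do not occur in $\omega$, for each of which $A_s^0=0$. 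Because the word product involves only occurring letters, altering the matrices attached to non-occurring letters leaves $A^0_{\omega_n}\cdots A^0_{\omega_1}=E_{11}$ untouched, and therefore preserves the rank-one condition. Thus I would replace each non-occurring $A_s^0$ by a small, nonzero, pairwise distinct perturbation $\varepsilon_s M_s$; for the $\varepsilon_s$ sufficiently small the resulting tuple $\mathcal{A}$ is still irreducible and, by the openness asserted in Theorem~\ref{FIST}, still satisfies the strong finiteness hypothesis with characteristic word $\omega$, while now $A_1,\ldots,A_r$ are pairwise distinct and $\mathrm{rank}\,(A_{\omega_n}\cdots A_{\omega_1})=1$.

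The main obstacle is the interaction between the three requirements, since the rank-one condition is not stable under perturbation and so cannot simply be reconciled with the openness of the finiteness hypothesis by a generic wiggle. The device that resolves this is the observation that the word product is insensitive to the matrices indexed by letters absent from $\omega$, which quarantines the perturbation needed for distinctness away from the product whose rank must be controlled. The only genuine computation is the length-$n$ product analysis above, and the single point at which the primitivity hypothesis is indispensable is in forcing that product to have rank exactly one.
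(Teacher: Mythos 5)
Your proposal is correct, and its core coincides with the paper's own proof: the paper uses exactly the same tuple of partial cyclic shifts ($A_{\omega_i}e_i = e_{i+1}$ cyclically, all other columns zero), the same tracking of basis vectors to show that $A_{z_n}\cdots A_{z_1}=0$ whenever $(z_1,\ldots,z_n)\nsim\omega$ (so that every off-word product has Barabanov norm $0<\varrho(\mathcal{A})^n$), and the same use of primitivity of $\omega$ to force the word product to have rank exactly one. You diverge from the paper in two places. First, for irreducibility the paper argues directly with vectors: given $u\neq 0$ and $v$ annihilating all products $\left<A_{i_m}\cdots A_{i_1}u,v\right>$, it pushes $u$ through suitable cyclic products to produce a multiple of $e_1$ and then each $e_j$, concluding $v=0$; your observation that the generated algebra contains every matrix unit $E_{kl}$ (via $E_{k1}=A^0_{\omega_{k-1}}\cdots A^0_{\omega_1}E_{11}$ and $E_{1l}=E_{11}A^0_{\omega_n}\cdots A^0_{\omega_l}$) is an essentially equivalent but tidier packaging of the same mechanism. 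Second, and more substantially, for letters absent from $\omega$ the paper does not perturb and invoke Theorem \ref{FIST}: it defines the missing matrices explicitly as scalar multiples, of modulus strictly less than one, of the matrices already constructed, and then verifies the strong finiteness hypothesis directly, noting that for any Barabanov norm one has $\|A_i\|_{\mathcal{A}}\leq 1$ for all $i$, so any length-$n$ word using an extra letter satisfies $\|A_{z_n}\cdots A_{z_1}\|_{\mathcal{A}}\leq\prod_{i=1}^n\|A_{z_i}\|_{\mathcal{A}}<1=\varrho(\mathcal{A})^n$. Your route through Theorem \ref{FIST} is legitimate (there is no circularity, since the proof of that theorem nowhere uses the Proposition) and your quarantining of the perturbation to non-occurring letters correctly protects the rank-one condition; what the paper's direct argument buys is that the Proposition remains logically independent of Theorem \ref{FIST} and fully explicit, whereas the smallness threshold in your version is non-effective, since the openness in Theorem \ref{FIST} is obtained by a compactness (Arz\'ela--Ascoli) argument and comes with no quantitative bound.
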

An easy consequence of Theorem \ref{FIST} and Proposition \ref{TheProposition} is that for every $r,d \geq 2$, there exists a nonempty open subset of $\mathcal{O}_r(\KK^d)$ in which the finiteness property is everywhere satisfied. In view of Proposition \ref{TheProposition} it seems natural to ask the following question, which the author is not at present able to resolve: for fixed $r,d \geq 2$, is it the case that every $\omega \in \bigcup_{n =1}^\infty \Omega_r^n$ arises as the characteristic word of some $\mathcal{A} \in \mathcal{I}_r(\CC^d)$?

In order to state our second theorem we require some further definitions. Given $\mathcal{A}=(A_1,\ldots,A_r)\in\mathcal{O}_r(\KK^d)$ let us define
\[\mathcal{A}_n:=\left\{A_{i_n}\cdots A_{i_1}\colon (i_1,\ldots,i_n) \in \Omega^n_r\right\}\]
for each $n \in \mathbb{N}$. We shall say that $\mathcal{A}$ is \emph{product bounded} if the set $\bigcup_{n \in \mathbb{N}}\mathcal{A}_n$ is a bounded subset of $\Mat_d(\mathbb{K})$, and that $\mathcal{A}$ is \emph{relatively product bounded} if $\varrho(\mathcal{A})>0$ and $\varrho(\mathcal{A})^{-1}\mathcal{A}$ is product bounded. If $\varrho(\mathcal{A})>0$ and $\mathcal{A}$ admits an extremal norm then clearly $\mathcal{A}$ must be relatively product bounded, so in particular every irreducible $\mathcal{A}$ has this property. For relatively product bounded $\mathcal{A}$ the \emph{limit semigroup} of $\mathcal{A}$, introduced by F. Wirth in \cite{Wirth1}, is defined to be the set
\[\mathcal{S}(\mathcal{A}) := \bigcap_{m=1}^\infty \left(\overline{\bigcup_{n=m}^\infty \varrho(\mathcal{A})^{-n}\mathcal{A}_n}\right).\] 
In this article we shall say that $\mathcal{A} \in \mathcal{O}_r(\KK^d)$ has the \emph{rank one property} if it is relatively product bounded and every nonzero element of $\mathcal{S}(\mathcal{A})$ has rank one. By Proposition \ref{TheProposition}, every $\mathcal{I}_r(\KK^d)$ with $r,d \geq 2$ contains some $\mathcal{A}$ satisfying the rank one property. It turns out that the rank one property is also stable under small perturbations:
\begin{proposition}\label{R1P}
For each $d,r \in \NN$ the set of all $\mathcal{A} \in \mathcal{I}_r(\KK^d)$ satisfying the rank one property is open. In the particular case $\mathbb{K}=\mathbb{C}$, $d=2$ this set is also dense.
\end{proposition}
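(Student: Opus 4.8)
The plan is to reduce the rank one property to a single strict inequality between joint spectral radii, which is manifestly an open condition. For $M \in \Mat_d(\KK)$ let $\wedge^2 M \in \Mat_{\binom{d}{2}}(\KK)$ denote the induced action on the second exterior power $\wedge^2\KK^d$, and for $\mathcal{A}=(A_1,\ldots,A_r)$ write $\wedge^2\mathcal{A} := (\wedge^2 A_1,\ldots,\wedge^2 A_r) \in \mathcal{O}_r(\KK^{\binom{d}{2}})$. Since $\wedge^2$ is an algebra homomorphism whose entries are polynomial in those of $M$, the assignment $\mathcal{A}\mapsto\wedge^2\mathcal{A}$ is continuous and carries products to products. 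Equipping $\KK^d$ and $\wedge^2\KK^d$ with the operator norms $\|\cdot\|$ induced by their inner products, the largest singular value of $\wedge^2 M$ is $\sigma_1(M)\sigma_2(M)$, where $\sigma_1 \geq \sigma_2 \geq \cdots$ are the singular values of $M$; thus $\|\wedge^2 M\|=\sigma_1(M)\sigma_2(M)$. My claim is that an irreducible $\mathcal{A}$ has the rank one property if and only if $\varrho(\wedge^2\mathcal{A}) < \varrho(\mathcal{A})^2$.

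To prove this equivalence I would proceed as follows. Irreducibility gives $\varrho(\mathcal{A})>0$ and relative product boundedness, so there is $C$ with $\sigma_1(P)\leq C\varrho(\mathcal{A})^n$ for every $P\in\mathcal{A}_n$ and every $n$; moreover $\varrho(\wedge^2\mathcal{A})\leq\varrho(\mathcal{A})^2$ always, since $\|\wedge^2 P\|=\sigma_1(P)\sigma_2(P)\leq\sigma_1(P)^2$. Suppose first that $\varrho(\wedge^2\mathcal{A})=\varrho(\mathcal{A})^2$. The sequence $g(n):=\max_{P\in\mathcal{A}_n}\|\wedge^2 P\|$ is submultiplicative, so by Fekete's lemma $g(n)^{1/n}\geq\varrho(\wedge^2\mathcal{A})=\varrho(\mathcal{A})^2$; choosing $P_n$ realising $g(n)$ gives $\sigma_2(P_n)=\|\wedge^2 P_n\|/\sigma_1(P_n)\geq C^{-1}\varrho(\mathcal{A})^n$, so any subsequential limit $M$ of the bounded sequence $\varrho(\mathcal{A})^{-n}P_n$ lies in $\mathcal{S}(\mathcal{A})$ with $\sigma_2(M)\geq C^{-1}>0$, i.e.\ rank at least two, contradicting the rank one property. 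Conversely, if $\varrho(\wedge^2\mathcal{A})<\varrho(\mathcal{A})^2$ then for any nonzero $M=\lim_j\varrho(\mathcal{A})^{-n_j}P_j\in\mathcal{S}(\mathcal{A})$ we have $\sigma_1(P_j)\geq\tfrac12\|M\|\varrho(\mathcal{A})^{n_j}$ for large $j$, while $\|\wedge^2 P_j\|\leq g(n_j)\leq(\varrho(\wedge^2\mathcal{A})+o(1))^{n_j}$, so $\sigma_2(\varrho(\mathcal{A})^{-n_j}P_j)=\varrho(\mathcal{A})^{-n_j}\|\wedge^2 P_j\|/\sigma_1(P_j)$ decays geometrically; hence $\sigma_2(M)=0$ and $M$ has rank one. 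With the equivalence established, openness is immediate: $\mathcal{I}_r(\KK^d)$ is open, and on it both $\mathcal{B}\mapsto\varrho(\wedge^2\mathcal{B})$ and $\mathcal{B}\mapsto\varrho(\mathcal{B})^2$ are continuous by \eqref{ZOMGBWF}, so the condition $\varrho(\wedge^2\mathcal{B})<\varrho(\mathcal{B})^2$ cuts out an open set.

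For the density statement I would specialise to $\KK=\CC$, $d=2$, where $\wedge^2 A_i=\det A_i$ is a scalar, so $\varrho(\wedge^2\mathcal{A})=\max_i|\det A_i|$ and the rank one property reads $\max_i|\det A_i|<\varrho(\mathcal{A})^2$. Since $\varrho(\mathcal{A})\geq\rho(A_i)$ and $\rho(A_i)^2\geq|\det A_i|$, the non-strict inequality $\max_i|\det A_i|\leq\varrho(\mathcal{A})^2$ holds automatically, so by openness it suffices to show that the equality locus has empty interior: every neighbourhood of every irreducible $\mathcal{A}$ with $\varrho(\mathcal{A})^2=\max_i|\det A_i|=:c$ should contain an irreducible tuple with strict inequality. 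In this equality case any index $i_0$ with $|\det A_{i_0}|=c$ satisfies $\rho(A_{i_0})=\sqrt{c}$ (from $\rho(A_{i_0})\leq\varrho(\mathcal{A})=\sqrt{c}$ and $\rho(A_{i_0})^2\geq c$), so $A_{i_0}$ has two eigenvalues of equal modulus. Over $\CC$ one may put $A_{i_0}$ in upper triangular form and replace its eigenvalues $\lambda_1,\lambda_2$ by $\lambda_1 e^{s},\lambda_2 e^{-s}$ for a small real $s\neq 0$: this is an arbitrarily small, determinant-preserving perturbation $B_{i_0}$ with $\rho(B_{i_0})>\sqrt{|\det B_{i_0}|}=\sqrt{c}$. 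Leaving the other matrices fixed yields an irreducible $\mathcal{B}$ (irreducibility persisting under small perturbation) with $\max_i|\det B_i|=c$ and $\varrho(\mathcal{B})\geq\rho(B_{i_0})>\sqrt{c}$, as required.

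The principal technical work lies in the equivalence of the first two paragraphs, namely the two singular value estimates: these rely on the uniform bound $\sigma_1(P)\leq C\varrho(\mathcal{A})^n$ furnished by relative product boundedness and on careful treatment of the case $M=0$. The density argument is comparatively soft, but two restrictions deserve emphasis. The hypothesis $d=2$ is essential because only then is $\wedge^2\KK^d$ one-dimensional, collapsing $\varrho(\wedge^2\mathcal{A})$ to the elementary quantity $\max_i|\det A_i|$; for $d\geq 3$ no such reduction is available and density appears genuinely harder. The hypothesis $\KK=\CC$ enters through the unbalancing perturbation: a real $2\times 2$ matrix with a conjugate pair of eigenvalues $re^{\pm i\theta}$ already has eigenvalues of equal modulus, and no small real determinant-preserving perturbation can unbalance them without first rendering the discriminant real, which is impossible when $\theta$ is bounded away from $0$.
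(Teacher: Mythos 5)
Your proposal is correct and takes essentially the same route as the paper: the core step is the equivalence of the rank one property with $\varrho(\wedge^2\mathcal{A})<\varrho(\mathcal{A})^2$ (the paper's Lemma \ref{wedgio}, proved there via the identity \eqref{Bh}, which is exactly your $\|\wedge^2 M\|=\sigma_1(M)\sigma_2(M)$, with your Fekete/uniform-bound argument matching the paper's use of \eqref{ZOMGBWF} and product boundedness), after which openness follows from continuity of $\varrho$ just as in the paper. The only cosmetic difference is in the density step for $\KK=\CC$, $d=2$: the paper verifies the strict inequality on the dense open set of tuples all of whose matrices have two eigenvalues of distinct moduli, whereas you perturb a single extremal matrix at the equality locus, but the underlying mechanism --- collapsing $\wedge^2$ to the determinant and unbalancing eigenvalue moduli over $\CC$ --- is identical.
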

As an aside, we remark that there exist open subsets of $\mathcal{I}_2(\mathbb{R}^2)$ in which the rank one property does not hold. For example, let us consider the irreducible (with respect to $\RR^2$) pair of matrices $\mathcal{A}=(A_1,A_2)$ given by
\[A_1=\left(\begin{array}{cc}0&1\\1&0\end{array}\right), \qquad A_2=\left(\begin{array}{cc}\frac{1}{2}&0\\0&\frac{1}{2}\end{array}\right).\]
Since clearly $\vvv A_1\vvv=\rho(A_1)=1$ and $\vvv A_2 \vvv = \frac{1}{2}$ we have $\varrho(\mathcal{A})=\rho(A_1)=1$. If $\|\cdot\|$ is a Barabanov norm for $\mathcal{A}$ then $\|A_2\|=\frac{1}{2}<\varrho(\mathcal{A})$ since this matrix is a scalar multiple of the identity; it follows that $\mathcal{A}$ satisfies the strong finiteness hypothesis with $\omega = (1)$. By Theorem \ref{FIST} there exists a small open neighbourhood $U$ of $\mathcal{A}$ such that every $\mathcal{B}=(B_1,B_2)\in U$ is irreducible and satisfies $\varrho(\mathcal{B})=\rho(B_1)$. However, if $\mathcal{B}$ is close enough to $\mathcal{A}$ then $B_1$ must have a conjugate pair of eigenvalues, and it follows that $\mathcal{S}(\mathcal{B})$ contains the identity matrix when $\mathcal{B}$ is close enough to $\mathcal{A}$.

We shall say that $\mathcal{A} =(A_1,\ldots,A_r)\in \mathcal{O}_r(\KK^d)$ has the \emph{unbounded agreements property} if for any $N \in \NN$ and any pair of sequences $i_1,i_2 \colon \NN \to \{1,\ldots,r\}$ such that
\[\limsup_{n\to \infty} \left(\frac{\vvv A_{i_m(n)}\cdots A_{i_m(1)}\vvv}{\varrho(\mathcal{A})^n}\right)>0\]
for $m=1,2$, there exist $n_1,n_2 \in \NN$ such that $i(n_1+k)=j(n_2+k)$ throughout the range $1 \leq k \leq N$. It is straightforward to show that the strong finiteness hypothesis implies the unbounded agreements property, and thus Proposition \ref{TheProposition} implies that the unbounded agreements property is also satisfied in a nonempty subset of $\mathcal{I}_r(\KK^d)$ which contains an open set. On the other hand, the unbounded agreements property does not imply the finiteness property. If $\mathcal{A}$ denotes the pair of matrices shown in \cite{BM} to lack the finiteness property, then the existence of a unique height-maximizing shift-invariant measure on $\{0,1\}^{\NN}$ forces the unbounded agreements property to hold. We defer the proof of this assertion to a later publication.

We may now state our second theorem:
\begin{theorem}\label{UBN}
Suppose that $\mathcal{A} \in \mathcal{O}_r(\KK^d)$ is relatively product-bounded and satisfies both the unbounded agreements property and the rank one property. Then there exists at most one Barabanov norm for $\mathcal{A}$ up to multiplication by a scalar.
\end{theorem}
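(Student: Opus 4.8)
The plan is to rescale so that $\varrho(\mathcal{A})=1$; this is harmless, since replacing $\mathcal{A}$ by $\varrho(\mathcal{A})^{-1}\mathcal{A}$ preserves the Barabanov norms, the limit semigroup $\mathcal{S}(\mathcal{A})$, the rank one property, and (because its defining $\limsup$ is already normalised by $\varrho^n$) the unbounded agreements property. Supposing $\|\cdot\|_1$ and $\|\cdot\|_2$ are two Barabanov norms for $\mathcal{A}$, I would study the continuous, scale-invariant (homogeneous of degree zero) function $f(v):=\|v\|_1/\|v\|_2$ on $\KK^d\setminus\{0\}$, which attains a maximum $M$ and a minimum $m$ on the Euclidean unit sphere. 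Since two norms are proportional precisely when this ratio is constant, the whole theorem reduces to proving $M=m$.

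Next I would exploit a self-improving property of extremal trajectories issuing from the extreme directions of $f$. Let $v^\ast$ satisfy $f(v^\ast)=M$. Using the Barabanov equation for $\|\cdot\|_1$ one greedily constructs an infinite word $i_1\colon\NN\to\{1,\ldots,r\}$ whose prefix products $P^{(1)}_n=A_{i_1(n)}\cdots A_{i_1(1)}$ obey $\|P^{(1)}_nv^\ast\|_1=\|v^\ast\|_1$ for every $n$. As $\|\cdot\|_2$ is extremal we have $\|P^{(1)}_nv^\ast\|_2\le\|v^\ast\|_2$, whence $f(P^{(1)}_nv^\ast)=\|v^\ast\|_1/\|P^{(1)}_nv^\ast\|_2\ge M$; since $M$ is the global maximum this forces $f(P^{(1)}_nv^\ast)=M$ and $\|P^{(1)}_nv^\ast\|_2=\|v^\ast\|_2$ for all $n$. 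Thus the maximizing trajectory keeps $f$ pinned at $M$ and is \emph{simultaneously} extremal for both norms. The symmetric construction at a minimizer $v_\ast$ with $f(v_\ast)=m$ yields a word $i_2$ that is again extremal for both norms and along which $f\equiv m$. Because both trajectories preserve the Barabanov norm of a fixed nonzero vector at every step, their prefix products have Euclidean operator norm bounded below by a positive constant, so both satisfy the $\limsup$ condition required by the unbounded agreements property.

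The heart of the argument is to couple these two trajectories through a shared block. Fixing $N\in\NN$ and applying the unbounded agreements property to $i_1,i_2$ produces indices $n_1(N),n_2(N)$ and a common word $W_N$ of length $N$ read off identically from both trajectories. Write $\bar u_1^{(N)},\bar u_2^{(N)}$ for the $\|\cdot\|_1$-normalized states at times $n_1(N),n_2(N)$; then $f(\bar u_1^{(N)})=M$, $f(\bar u_2^{(N)})=m$, while extremality for $\|\cdot\|_1$ gives $\|A_{W_N}\bar u_1^{(N)}\|_1=\|A_{W_N}\bar u_2^{(N)}\|_1=1$. The latter identities bound $\vvv A_{W_N}\vvv$ below uniformly in $N$, and product-boundedness bounds it above; passing to a subsequence $N_k\to\infty$ along which $A_{W_{N_k}}\to S$, $\bar u_1^{(N_k)}\to\bar u_1$ and $\bar u_2^{(N_k)}\to\bar u_2$, the limit $S$ is a \emph{nonzero} element of $\mathcal{S}(\mathcal{A})$ and is therefore rank one, say $S=w\phi^\ast$. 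Since $\|S\bar u_1\|_1=\|S\bar u_2\|_1=1$, both $S\bar u_1$ and $S\bar u_2$ are nonzero, hence each is a nonzero scalar multiple of the single vector $w$; continuity of $f$ then yields $M=f(S\bar u_1)=f(w)=f(S\bar u_2)=m$, which completes the proof.

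The step I expect to be the main obstacle is precisely this coupling, where both hypotheses enter in an essential and interlocking way. The rank one property is what collapses the images of the two distinct input states $\bar u_1,\bar u_2$ onto the common line $\KK w$, while the unbounded agreements property is what guarantees that one arbitrarily long block $W_N$ can be applied to both trajectories at once; without it the two limiting rank one maps could have different ranges, and one would only recover $f(w_1)=M$ and $f(w_2)=m$ separately, with no contradiction. The delicate points are the uniform two-sided control of $\vvv A_{W_N}\vvv$ (so that the limit genuinely lands in $\mathcal{S}(\mathcal{A})$ rather than degenerating to $0$) and the bookkeeping required to extract a single subsequence along which the block, both input states, and hence both output directions converge together.
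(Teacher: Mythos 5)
Your proof is correct and takes essentially the same approach as the paper: your extreme level sets of the ratio $f$ are precisely the paper's sets $X_1,X_2$, and the greedy construction of trajectories pinned at the extreme ratio (hence preserving both norms), the coupling through a common block supplied by the unbounded agreements property, and the compactness limit landing in a nonzero element of $\mathcal{S}(\mathcal{A})$ all appear in the same roles in the paper's argument. The only cosmetic difference is that the paper argues by contradiction, showing the limit $B\in\mathcal{S}(\mathcal{A})$ maps $v_1\in X_1$, $v_2\in X_2$ to linearly independent vectors and hence has rank at least two, whereas you use the rank one property affirmatively to collapse both images onto a single line and conclude $M=m$ directly.
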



By combining all four of the results in this section one may easily obtain:
\begin{corollary}
For each $r,d \geq 2$ there exists a nonempty open set $U \subset \mathcal{I}_r(\KK^d)$ with the property that for every $\mathcal{A} \in U$, $\mathcal{A}$ satisfies the finiteness property, the unbounded agreements property and the rank one property, and has a unique Barabanov norm.
\end{corollary}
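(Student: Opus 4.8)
\section*{Proof proposal for the Corollary}

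The plan is to produce, for each fixed $r,d\geq 2$, a single base tuple $\mathcal{A}_0\in\mathcal{I}_r(\KK^d)$ that already enjoys all of the listed properties, and then to exhibit $U$ as the intersection of two open neighbourhoods of $\mathcal{A}_0$ supplied respectively by Theorem \ref{FIST} and Proposition \ref{R1P}. The whole point is that each of the four properties is either guaranteed to persist on an open set by one of the preceding results, or is implied on that open set by one of the properties that does persist.

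First I would fix a word $\omega=(\omega_1,\ldots,\omega_d)\in\Omega^d_r$ of length exactly $d$ which is not a power of a shorter word; since $r,d\geq 2$ such primitive words exist, for instance $\omega=(1,2,2,\ldots,2)$, which has no nontrivial period because $\omega_1\neq\omega_{1+k}$ for every $1\leq k\leq d-1$ and hence cannot satisfy $\omega_i=\omega_{i+k}$ for any $k<d$. Applying Proposition \ref{TheProposition} to this $\omega$ yields a tuple $\mathcal{A}_0=(A_1,\ldots,A_r)\in\mathcal{I}_r(\KK^d)$ which satisfies the strong finiteness hypothesis with characteristic word $\omega$ and for which $A_{\omega_d}\cdots A_{\omega_1}$ has rank one. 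As recorded in the discussion preceding Proposition \ref{R1P}, this construction also yields the rank one property, so $\mathcal{A}_0$ lies simultaneously in the set $U_1$ of tuples satisfying the strong finiteness hypothesis with characteristic word $\omega$ and in the set $U_2$ of tuples satisfying the rank one property.

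I would then set $U:=U_1\cap U_2$. By Theorem \ref{FIST} the set $U_1$ is open in $\mathcal{I}_r(\KK^d)$, and by Proposition \ref{R1P} the set $U_2$ is open; hence $U$ is open, and it is nonempty because it contains $\mathcal{A}_0$. It then remains to check that every $\mathcal{A}\in U$ has the four desired properties. Membership in $U_1$ supplies the strong finiteness hypothesis, which (as explained in the text) directly yields both the finiteness property, with $\varrho(\mathcal{A})=\rho(A_{\omega_d}\cdots A_{\omega_1})^{1/d}$, and the unbounded agreements property. Membership in $U_2$ supplies the rank one property, and in particular relative product boundedness; moreover, since $\mathcal{A}\in\mathcal{I}_r(\KK^d)$ is irreducible it admits a Barabanov norm. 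Feeding relative product boundedness together with the unbounded agreements property and the rank one property into Theorem \ref{UBN} shows that this Barabanov norm is unique up to a scalar, which completes the verification.

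The only genuinely delicate point, and the thing that dictates the shape of the argument, is that one cannot replace the intersection $U_1\cap U_2$ by $U_1$ alone: the strong finiteness hypothesis does not by itself force the rank one property, as the aside following Proposition \ref{R1P} illustrates, since there arbitrarily small perturbations place the identity in the limit semigroup. Thus the role of Proposition \ref{R1P} is indispensable, and the one thing that must be arranged with care is that Proposition \ref{TheProposition} delivers a base point lying in \emph{both} open sets at once. This it does precisely because its conclusion simultaneously provides the characteristic word $\omega$ and the rank one condition on $A_{\omega_d}\cdots A_{\omega_1}$, so no additional work beyond invoking the four cited results is needed.
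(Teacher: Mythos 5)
Your proposal is correct and is precisely the intended argument: the paper offers no written proof beyond the remark that the corollary follows ``by combining all four of the results,'' and your combination --- a base point from Proposition \ref{TheProposition} (which, as the paper itself asserts, also has the rank one property), openness of $U_1\cap U_2$ from Theorem \ref{FIST} and Proposition \ref{R1P}, the stated implications from the strong finiteness hypothesis to the finiteness and unbounded agreements properties, and uniqueness via Theorem \ref{UBN} together with existence from irreducibility --- is exactly that intended combination. Your choice of the primitive word $\omega=(1,2,\ldots,2)$ of length $d$ and your observation that $U_1$ alone would not suffice are both correct.
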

The remainder of this article is structured as follows. In the following section we give some examples of the applications and limitations of Theorems \ref{FIST} and \ref{UBN}, showing in particular that both the unbounded agreements property and the rank one property are necessary parts of the latter theorem. In \S\ref{poves} we give the proofs of Theorems \ref{FIST} and \ref{UBN} and Propositions \ref{TheProposition} and \ref{R1P}. Finally, in \S\ref{purple-aki} we briefly describe the relationship between Theorems \ref{FIST} and \ref{UBN} and certain recent results in ergodic theory.
\section{Examples}
In each of the examples below we may take $\mathbb{K}$ to be either $\mathbb{R}$ or $\mathbb{C}$. The reader may readily verify that each example is irreducible in both of the two cases.

{\bf{Example 1}}. Define
\[A_1=\left(\begin{array}{cc}0&1\\\lambda_1&0\end{array}\right),\qquad A_2=\left(\begin{array}{cc}0&\lambda_2\\1&0\end{array}\right),
\]
where $0 \leq |\lambda_1|,|\lambda_2|<1$. Then $\mathcal{A}=(A_1,A_2)$ satisfies the strong finiteness hypothesis, has the unbounded agreements property and the rank one property, and has a unique Barabanov norm. (In the case $\lambda_1=\lambda_2=0$ this is the example given by Proposition \ref{TheProposition} with $r=n=2$.)
\begin{proof}
A straightforward calculation yields $\vvv A_1A_2 \vvv = \vvv A_2A_1\vvv=\rho(A_1A_2)=1$, $\vvv A_1^2\vvv = |\lambda_1|<1$ and $\vvv A_2^2\vvv = |\lambda_2|<1$, from which it follows that $\varrho(\mathcal{A})=1$. If $i \colon \NN \to \{1,2\}$ has $\limsup_{n\to\infty} \vvv A_{i(n)}\cdots A_{i(1)}\vvv>0$ then it follows that there must exist $n_0>0$ such that $i(n_0+2n+1)=1$ and $i(n_0+2n)=2$ for every $n \in \NN$, which implies the unbounded agreements property. Since $|\det A_1|,|\det A_2|<1$ it is clear that every element of $\mathcal{S}(\mathcal{A})$ has determinant zero and hence has rank at most equal to one. The norm defined by $\|(x,y)^T\|:=\max\{|x|,|y|\}$ is a Barabanov norm for $\mathcal{A}$, since for each $v=(x,y)^T \in \KK^2$
\[\max \left\{\|A_1v\|,\|A_2v\|\right\} = \max\{|\lambda_1 x|,|y|,|\lambda_2 y|,|x|\}=\max\{|x|,|y|\}=\|v\|.\]
By Theorem \ref{UBN} it is the only such norm up to scalar multiplication. Direct calculation then shows that $\|A_1^2\|,\|A_2^2\|<1$ and hence the strong finiteness hypothesis holds.
\end{proof}

{\bf{Example 2}}. Define
\[A_1= \left(\begin{array}{cc}1&0\\0&\lambda\end{array}\right), \qquad A_2=\left(\begin{array}{cc}0&\lambda\\\lambda&0\end{array}\right),\]
where $0<|\lambda|<1$. Then $\mathcal{A}=(A_1,A_2)$ has the finiteness property, the unbounded agreements property and the rank one property, but does not satisfy the strong finiteness hypothesis.
\begin{proof}
Straightforward calculation shows that $\vvv A_1 \vvv = \rho(A_1)=1$ and $\vvv A_2 \vvv = |\lambda|<1$ so that in particular $\varrho(\mathcal{A})=1$ and $\mathcal{A}$ has the finiteness property. The unbounded agreements property and the rank one property follow in the same manner as for the previous example. Define a norm on $\KK^2$ by $\left\|(x,y)^T\right\|=\max\{| x|,|\lambda y|\}$. For each $v=(x,y)^T \in \KK^2$ we have
\[\max\{\|A_1v\|,\|A_2v\|\}=\max\left\{\left|x\right|,\left|\lambda^2y\right|,\left|\lambda y\right|,\left|\lambda^2x\right|\right\}=\max\{|x|,|\lambda y|\}=\|v\|\]
and therefore $\|\cdot  \|$ is a Barabanov norm for $\mathcal{A}$. Define $u:=(0,1)^T \in \KK^2$. For each $n \in \NN$ we have $\left\|A_1^{n-1}A_2u\right\|=|\lambda|=\|u\|$ and therefore $\left\|A_1^{n-1}A_2\right\|=1$, and clearly we also have $\left\|A_1^n\right\|=1$. We deduce that no $\omega \in \Omega_2^n$ can be a characteristic word for $\mathcal{A}$, and since $n$ is arbitrary we conclude that $\mathcal{A}$ does not satisfy the strong finiteness hypothesis.\end{proof}

{\bf{Example 3}}. Define
\[A_1=\left(\begin{array}{cc}1&0\\0&-1\end{array}\right), \qquad A_2=\left(\begin{array}{cc}0&\lambda\\\lambda&0\end{array}\right),\]
where $0<|\lambda|<1$. Then $\mathcal{A}=(A_1,A_2)$ has the unbounded agreements property and the finiteness property, but lacks the rank one property, and has more than one Barabanov norm.
\begin{proof}
 Clearly $\vvv A_1 \vvv = \rho(A_1)=1$ and $\vvv A_2 \vvv =|\lambda|<1$. It follows that $\varrho(\mathcal{A})=1$ and that $\mathcal{A}$ satisfies the finiteness property, and the unbounded agreements property follows as before. It is clear that $I \in \mathcal{S}(\mathcal{A})$ and hence $\mathcal{A}$ does not satisfy the rank one property. For any $p \geq 1$ the norm $\|\cdot\|_p$ defined by $\|(x,y)^T\|_p = (|x|^p+|y|^p)^{1/p}$ is a Barabanov norm since $\|A_2v\|_p \leq \|A_1v\|_p = \|v\|_p$ for all $v \in \mathbb{K}^2$.\end{proof}

{\bf{Example 4}}. Define
\[A_1 = \left(\begin{array}{cc}1&0\\0&0\end{array}\right),\qquad A_2=\left(\begin{array}{cc}0&0\\0&1\end{array}\right), \qquad A_3=\left(\begin{array}{cc}0&\lambda\\\lambda&0\end{array}\right),\]
where $0<|\lambda|<1$. Then $\mathcal{A}=(A_1,A_2,A_3)$ has the rank one property and the finiteness property, but lacks the unbounded agreements property, and has more than one Barabanov norm.
\begin{proof}
Clearly $\varrho(\mathcal{A})=1$, and $\mathcal{A}$ does not satisfy the unbounded agreements property since $\lim_{n\to\infty} A_1^n =  A_1 \neq 0$ but also $ \lim_{n\to\infty} A_2^n =A_2 \neq 0$. If $A \in \mathcal{A}_n$ with $\mathrm{rank}\,A=2$ then necessarily $A=A_3^n$ and hence $\vvv A\vvv = |\lambda|^n$. It follows that $\mathcal{S}(\mathcal{A})$ cannot contain a matrix of rank $2$. If $|\lambda|\leq\xi\leq |\lambda|^{-1}$ then the norm $\|\cdot\|_\xi$ defined by $\|(x,y)^T\|_\xi = \max\{|x|,\xi|y|\}$ is a Barabanov norm, since for every $v = (x,y)^T \in \mathbb{K}^2$
\begin{align*}
\max\left\{\|A_1v\|_\xi,\|A_2v\|_\xi,\|A_3v\|_\xi\right\} &=\max\left\{|x|,\xi|y|, |\lambda y|,\xi|\lambda x|\right\}\\ &= \max\left\{|x|,\xi|y|\right\} = \|v\|_\xi\end{align*}
as required.\end{proof}
\section{Proofs of Theorems \ref{FIST} and \ref{UBN} and Propositions \ref{TheProposition} and \ref{R1P}}\label{poves}

\subsection{Proof of Theorem \ref{FIST}}
Let $\mathcal{N}$ denote the set of all norms on $\KK^d$, and for $\|\cdot\|_1,\|\cdot\|_2 \in \mathcal{N}$ define
\[d_{\mathcal{N}}(\|\cdot\|_1,\|\cdot\|_2):= \sup\left\{\left|\log\left( \frac{\|v\|_1}{\|v\|_2}\right)\right|\colon \vvv v \vvv=1\right\}.\]
It is clear that $d_{\mathcal{N}}$ is a metric on $\mathcal{N}$. Moreover, if $\mathcal{Z} \subset \mathcal{N}$ is closed and has finite diameter with respect to $d_{\mathcal{N}}$, then it follows from the Arz\'ela-Ascoli theorem that $\mathcal{Z}$ is compact. We require the following lemma:
\begin{lemma}\label{wirthlemma}
Let $v_0 \in \KK^d$, let $\left(\mathcal{A}^{(k)}\right)_{k=1}^\infty$ be a sequence of elements of $\mathcal{I}_r(\KK^d)$ converging to some $\mathcal{A} \in \mathcal{I}_r(\KK^d)$, and for each $k$ let $\|\cdot\|_k$ be a Barabanov norm for $\mathcal{A}^{(k)}$ such that $\|v_0\|_k=1$. Then the sequence of norms $\|\cdot\|_{k}$ has an accumulation point in $\mathcal{N}$ which is a Barabanov norm for $\mathcal{A}$. 
\end{lemma}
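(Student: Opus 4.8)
The plan is to show that the normalised Barabanov norms $\|\cdot\|_k$ all lie, for $k$ large, in a $d_{\mathcal N}$-bounded subset of $\mathcal N$, extract a convergent subsequence using the stated compactness criterion, and then pass to the limit in the Barabanov equation. Concretely, I would first prove that there are constants $0<c\le C<\infty$ and an index $k_0$ such that $c\vvv v\vvv\le\|v\|_k\le C\vvv v\vvv$ for all $v\in\KK^d$ and all $k\ge k_0$. Granting this, the family $\{\|\cdot\|_k\}_{k\ge k_0}$ has finite $d_{\mathcal N}$-diameter (each $\|\cdot\|_k$ satisfies $d_{\mathcal N}(\|\cdot\|_k,\vvv\cdot\vvv)\le\max\{\log C,-\log c\}$), so its closure $\mathcal Z\subset\mathcal N$ is closed and of finite diameter, hence compact; a subsequence $\|\cdot\|_{k_j}$ therefore converges in $d_{\mathcal N}$ to some $\|\cdot\|_\infty\in\mathcal N$, and convergence in $d_{\mathcal N}$ forces $\|\cdot\|_{k_j}\to\|\cdot\|_\infty$ uniformly on the Euclidean unit sphere.

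The upper bound is the easy half. Since $\mathcal A$ is irreducible and $v_0\ne0$ (as $\|v_0\|_k=1$), the semigroup orbit of $v_0$ spans $\KK^d$, so I can fix words $w^{(1)},\dots,w^{(d)}$ for which $A_{w^{(1)}}v_0,\dots,A_{w^{(d)}}v_0$ form a basis. Because $\mathcal A^{(k)}\to\mathcal A$ and the relevant determinant depends continuously on the matrices, for large $k$ the vectors $A^{(k)}_{w^{(j)}}v_0$ remain a basis with a uniform lower bound on the modulus of their determinant, hence with uniformly bounded dual basis. Extremality of $\|\cdot\|_k$ together with $\|v_0\|_k=1$ bounds $\|A^{(k)}_{w^{(j)}}v_0\|_k\le\varrho(\mathcal A^{(k)})^{|w^{(j)}|}$, which is uniformly bounded since $\varrho(\mathcal A^{(k)})\to\varrho(\mathcal A)$. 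Expanding an arbitrary $v$ in this basis and using the triangle inequality then yields $\|v\|_k\le C\vvv v\vvv$.

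The lower bound is the main obstacle, and I would obtain it by applying the same idea to the dual norms $\|\cdot\|_k^{\ast}$ and the transposed tuples $\mathcal A^{(k)\ast}=(A_1^{(k)\ast},\dots,A_r^{(k)\ast})$, which are again irreducible (since the invariant subspaces of $\mathcal A^{(k)\ast}$ are the orthogonal complements of those of $\mathcal A^{(k)}$) and for which $\|\cdot\|_k^{\ast}$ is extremal with the same value $\varrho(\mathcal A^{(k)})$. The lower bound $\|v\|_k\ge c\vvv v\vvv$ is equivalent to the upper bound $\|u\|_k^{\ast}\le c^{-1}\vvv u\vvv$, so it suffices to run the previous argument for the dual norms. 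The difficulty is that there is no fixed seed functional of uniformly bounded dual norm available a priori, so I would instead seed with a Hahn--Banach functional $u_k$ satisfying $\|u_k\|_k^{\ast}=1$ and $\langle v_0,u_k\rangle=\|v_0\|_k=1$; the already-established primal upper bound gives $\vvv u_k\vvv\le C$, while $\langle v_0,u_k\rangle=1$ gives $\vvv u_k\vvv\ge\vvv v_0\vvv^{-1}$, so the seeds lie in a fixed Euclidean annulus. Since the seed now varies with $k$, I would use irreducibility of $\mathcal A^{\ast}$ to produce a single length $L$ such that $\{A^{\ast}_wu:|w|\le L\}$ spans $\KK^d$ for every $u\ne0$ --- this follows because the set of $u$ for which the length-$\le L$ words span is open and increasing in $L$ and covers the compact sphere --- and then a continuity and compactness argument provides, uniformly in $k$ and in the unit seed $u_k/\vvv u_k\vvv$, a choice of $d$ words of length at most $L$ whose images form a uniformly well-conditioned basis. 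Extremality of $\|\cdot\|_k^{\ast}$ and $\|u_k\|_k^{\ast}=1$ then bound these basis vectors in dual norm exactly as before, giving $\|u\|_k^{\ast}\le c^{-1}\vvv u\vvv$ and hence the desired lower bound for $k\ge k_0$.

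Finally, to identify the limit I would pass to the limit in the Barabanov equation $\max_{1\le i\le r}\|A_i^{(k_j)}v\|_{k_j}=\varrho(\mathcal A^{(k_j)})\,\|v\|_{k_j}$. Using the uniform bounds, uniform convergence of $\|\cdot\|_{k_j}$ to $\|\cdot\|_\infty$ on bounded sets, the convergence $A_i^{(k_j)}\to A_i$, and continuity of $\varrho$, both sides converge to $\max_i\|A_iv\|_\infty$ and $\varrho(\mathcal A)\|v\|_\infty$ respectively, so $\|\cdot\|_\infty$ is a Barabanov norm for $\mathcal A$ and is the required accumulation point.
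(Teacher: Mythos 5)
Your proposal is correct, but it reaches the key compactness input by a genuinely different (and much more self-contained) route than the paper. The paper disposes of the uniform norm-equivalence in one line: since $\{\mathcal{A}^{(k)} \colon k \in \NN\}\cup\{\mathcal{A}\}$ is a compact subset of $\mathcal{I}_r(\KK^d)$, it simply cites \cite[Theorem 4.1]{Wirth1} to conclude that $\{\|\cdot\|_k\}$ has finite $d_{\mathcal{N}}$-diameter, then extracts a convergent subsequence via the Arz\'ela--Ascoli remark preceding the lemma and passes to the limit in the Barabanov equation exactly as you do, using the same two estimates $\bigl|\|A_i^{(k)}v\|_k-\|A_iv\|_k\bigr|\le C\vvv A_i^{(k)}-A_i\vvv\,\vvv v\vvv$ and $\bigl|\|A_iv\|_k-\|A_iv\|\bigr|\le\|A_iv\|\bigl(e^{d_{\mathcal{N}}(\|\cdot\|_k,\|\cdot\|)}-1\bigr)$. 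What you do differently is to reprove the needed case of Wirth's theorem from scratch: the uniform upper bound $\|v\|_k\le C\vvv v\vvv$ via the spanning semigroup orbit of the seed $v_0$ (irreducibility plus extremality plus continuity of $\varrho$), and the uniform lower bound by dualising --- the dual norms are extremal for the adjoint tuple with the same joint spectral radius, seeded by Hahn--Banach functionals confined to a fixed Euclidean annulus, with a single spanning length $L$ obtained by an open-cover argument on the sphere. This argument is sound; incidentally, the open-cover step can be avoided, since the subspaces $\mathrm{span}\{A^*_wu\colon |w|\le L\}$ strictly increase in dimension until they stabilise at an invariant subspace, so $L=d-1$ always suffices for any irreducible tuple, and the only uniformity in $k$ then needed is continuity of the relevant determinants. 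What your approach buys is independence from the external citation (you effectively embed a proof of the relevant case of \cite[Theorem 4.1]{Wirth1}), at the cost of considerable extra length; the paper's route buys brevity by outsourcing precisely the uniform equivalence you establish by hand. A small point in your favour: you carry the factors $\varrho(\mathcal{A}^{(k_j)})$ explicitly through the limiting step and invoke continuity of $\varrho$, whereas the paper's displayed limit computation suppresses them (implicitly normalising $\varrho(\mathcal{A})=1$).
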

\begin{proof}
Let $\mathcal{A}=(A_1,\ldots,A_r)$ and $\mathcal{A}^{(k)}=\left(A_1^{(k)},\ldots,A_r^{(k)}\right)$ for each $k \in \NN$. By hypothesis the set $\{\mathcal{A}^{(k)} \colon k \in \NN\}\cup\{\mathcal{A}\}$ is a compact subset of $\mathcal{I}_r(\KK^d)$, and it follows from \cite[Theorem 4.1]{Wirth1} that the set $\{\|\cdot\|_k \colon k \in \NN\}$ has finite diameter as a subset of $\mathcal{N}$. It follows that there exist a subsequence $(k_j)_{j=1}^\infty$ and norm $\|\cdot\|\in\mathcal{N}$ such that $\lim_{j \to \infty}\|\cdot\|_{k_j}=\|\cdot\|$. If $v \in \KK^d$, then for $i=1,\ldots,r$ and $k \in \NN$
\[\left|\left\|A_i^{(k_j)}v\right\|_{k} - \left\|A_i v\right\|_{k}\right|  \leq C \bigvvv A_i^{(k)} - A_i\bigvvv . \vvv v \vvv\]
and
\[ \left|\|A_iv\|_{k} - \|A_iv\|\right| \leq \|A_iv\|\left(e^{d_{\mathcal{N}}(\|\cdot\|_{k},\|\cdot\|)}-1\right),\]
implying that
\[\lim_{j \to \infty} \left\|A_i^{(k_j)}v\right\|_{k_j} = \|A_iv\|\]
for all such $i$ and $v$. We conclude that for every $v \in \KK^d$
\[\|v\| = \lim_{j \to \infty} \|v\|_{k_j} = \lim_{j \to \infty} \max_{1 \leq i \leq r} \left\|A_i^{(k_j)}v\right\|_{k_j} = \max_{1 \leq i \leq r} \left\|A_iv\right\|\]
and $\|\cdot\|$ is a Barabanov norm for $\mathcal{A}$.
\end{proof}
For each $\omega = (\omega_1,\ldots,\omega_n)$ and $\mathcal{A} =(A_1,\ldots,A_r)\in \mathcal{I}_r(\KK^d)$ let us now define
\begin{equation}\label{blegg}
\Theta_\omega(\mathcal{A}):= \sup \left\{\|A_{\omega_n}\cdots A_{\omega_1}\|^{1/n} \colon \|\cdot\|\text{ is a Barabanov norm for }\mathcal{A}\right\}.\end{equation}
Let $v_0 \in \KK^d \setminus \{0\}$ be arbitrary. If for each $k$ we choose $\|\cdot\|_k \in \mathcal{N}$ such that $\|A_{\omega_n}\cdots A_{\omega_1}\|^{1/n}_k > \Theta_\omega(\mathcal{A})-\frac{1}{k}$, then by rescaling each $\|\cdot\|_k$ so that $\|v_0\|_k=1$ and applying Lemma \ref{wirthlemma} with $\mathcal{A}^{(k)}\equiv \mathcal{A}$, we see that the supremum in \eqref{blegg} is always attained.

We claim that each $\Theta_\omega \colon \mathcal{I}_r(\KK^d) \to \RR$ is upper semi-continuous. For each $k \in \NN$ let $\mathcal{A}^{(k)} =\left(A^{(k)}_1,\ldots,A^{(k)}_r\right) \in \mathcal{I}_r(\KK^d)$ and suppose that $\lim_{k \to \infty}\mathcal{A}^{(k)} = \mathcal{A} = (A_1,\ldots,A_r) \in \mathcal{I}_r(\KK^d)$. Choose some arbitrary $v_0 \in \KK^d \setminus \{0\}$ and for each $k\in \NN$ let $\|\cdot\|_k$ be a Barabanov norm such that $\Theta_\omega(\mathcal{A}^{(k)})=\left\|A^{(k)}_{\omega_n}\cdots A^{(k)}_{\omega_1}\right\|_k$ and $\|v_0\|_k=1$. Choose a sequence of integers $(k_j)_{j=1}^\infty$ such that $\lim_{j\to\infty} \Theta_\omega(\mathcal{A}^{(k_j)}) = \limsup_{k\to\infty} \Theta_\omega(\mathcal{A}^{(k)})$. By Lemma \ref{wirthlemma}, we may replace $(k_j)$ with a finer subsequence such that $\lim_{j \to \infty}\|\cdot\|_{k_j} = \|\cdot\| \in \mathcal{N}$, where $\|\cdot\|$ is a Barabanov norm for $\mathcal{A}$. It follows that
\[\limsup_{k \to \infty}\Theta_\omega\left(\mathcal{A}^{(k)}\right) = \lim_{j \to 
\infty} \left\|A^{(k_j)}_{\omega_n} \cdots A^{(k_j)}_{\omega_1}\right\|_{k_j} = \left\|A_{\omega_n}\cdots A_{\omega_1}\right\| \leq \Theta_\omega(\mathcal{A}),\]
since $\|\cdot\|$ is a Barabanov norm for $\mathcal{A}$, which proves the claim.

Now let us consider some fixed $\hat\omega \in \Omega^n_r$. Since the supremum in \eqref{blegg} is attained for every $\mathcal{A}$ and $\omega$, it follows that $\mathcal{A} \in \mathcal{I}_r(\KK^d)$ satisfies the strong finiteness hypothesis with characteristic word $\hat\omega$ if and only if
\begin{equation}\label{botschafter}\max\left\{\Theta_\omega(\mathcal{A})\colon \omega \in \Omega^n_r \text{ and }\omega \nsim \hat\omega\right\}<\varrho(\mathcal{A})^n.\end{equation}
We have shown that each of the functions $\Theta_\omega$ is upper semi-continuous, and since $\varrho$ depends continuously on $\mathcal{A}$ it follows that the set of all $\mathcal{A} \in \mathcal{I}_r(\KK^d)$ solving the inequality \eqref{botschafter} must be open. The proof is complete.

\subsection{Proof of Proposition \ref{TheProposition}}

Let $r,n \in \NN$, and let $(\omega_1,\ldots,\omega_n)\in\Omega_r^n$. We shall begin by proving Proposition \ref{TheProposition} subject to the additional assumption that $(\omega_1,\ldots,\omega_n)$ includes at least one instance of every symbol $1,\ldots,r$. 

Let $e_1,\ldots,e_n$ be the standard basis for $\KK^n$, and let $\|\cdot\|$ be the norm on $\KK^n$ given by $\|\sum_{k=1}^n \lambda_k e_k\|=\max |\lambda_k|$. Define an $r$-tuple of matrices $\mathcal{A}=(A_1,\ldots,A_r)$ by setting $A_{\omega_i}e_i:=e_{i+1}$ for $1 \leq i <n$ and $A_{\omega_n}e_n:=e_1$, and $A_j e_k:=0$ in all other cases. Clearly $A_{\omega_n}\cdots A_{\omega_1}e_1=e_1$ and $\|A_i\| \leq 1$ for every $i$, and it follows that $\rho(A_{\omega_n}\cdots A_{\omega_1}) = \varrho(\mathcal{A})=1$. If $1 \leq i<j \leq r$ then by hypothesis there is $k$ such that $j=\omega_k \neq i$ and therefore $A_ie_k \neq A_je_k$, so in particular we have $A_i \neq A_j$ and the matrices forming $\mathcal{A}$ are pairwise distinct. 

We claim that if $(z_1,\ldots,z_n)\nsim (\omega_1,\ldots,\omega_n)$ then $A_{\omega_n}\cdots A_{\omega_1} =0$, which clearly implies that $(\omega_1,\ldots,\omega_n)$ must be a characteristic word for $\mathcal{A}$. Let $(z_1,\ldots,z_n) \in \Omega_r^n$ and suppose that $A_{z_n}\cdots A_{z_1} \neq 0$. To simplify notation in the remainder of this paragraph it will be convenient to add subscripts modulo $n$, identifying $n+1$ with $1$, $n+2$ with $2$, et cetera. Since $A_{z_n}\cdots A_{z_1}$ is nonzero there must exist $k$ such that $A_{z_n}\cdots A_{z_1}e_k \neq 0$. From the definition of $\mathcal{A}$ it follows that $z_1 = \omega_k$ and $A_{z_1}e_k = e_{k+1}$. We must then have  $A_{z_2}e_{k+1} \neq 0$ and by the same reasoning it follows that $z_2 = \omega_{k+1}$ and $A_{z_2}A_{z_1}e_k = A_{z_2}e_{k+1}=e_{k+2}$. Proceeding inductively in this fashion we obtain $A_{z_n}\cdots A_{z_1}e_k = e_k$ and $(z_1,\ldots,z_n)=(\omega_{k+1},\ldots,\omega_n,\omega_1,\ldots,\omega_{k})$, proving the claim. 

If $A_{\omega_n}\cdots A_{\omega_1}e_k \neq 0$ for some $k \neq 1$ then the above reasoning shows that $(\omega_1,\ldots,\omega_n)=(\omega_k,\ldots,\omega_n,\omega_1,\ldots,\omega_{k-1})$. It follows from this that  $(\omega_1,\ldots,\omega_n)$ is equal to $(\omega_1,\ldots,\omega_q)^p$ where $q=\mathrm{hcf}(k-1,n)$ and $n=pq$. Since $(\omega_1,\ldots,\omega_n)$ is by hypothesis not equal to a power of a shorter word we deduce that $A_{\omega_n}\cdots A_{\omega_1}$ has rank equal to one as claimed.

It remains to show that $\mathcal{A}$ is irreducible. Let $u,v \in \KK^d$ with $u \neq 0$, and suppose that for every $m \in \NN$ one has $\left<A_{i_m}\cdots A_{i_1}u,v\right>=0$ for every $(i_1,\ldots,i_m)\in \Omega^m_r$. We claim that necessarily $v=0$, implying the irreducibility of $\mathcal{A}$. Since $u \neq 0$ there is $k$ such that $\left<u,e_k\right>\neq 0$. Now, from the previous two paragraphs it follows that
\[A_{\omega_{k+1}}\cdots A_{\omega_1}A_{\omega_n}\cdots A_{\omega_k}u = \left<u,e_k\right>e_k\]
and therefore
\[A_{\omega_{n}}\cdots A_{\omega_1}A_{\omega_n}\cdots A_{\omega_k}u = \left<u,e_k\right>e_1.\]
We deduce in particular that $\left<e_1,v\right>=0$, and that for each $m \in \NN$ we must have  $\left<A_{i_m}\cdots A_{i_1}e_1,v\right>=0$ for every  $(i_1,\ldots,i_m)\in\Omega^m_r$. Since by construction $A_{\omega_{\ell}}\cdots A_{\omega_1}e_1 = e_{\ell+1}$ for $1 \leq \ell < n$, we conclude that $\left<e_j,v\right>$ must equal zero for every $j$, and it follows that $v=0$, proving the claim. This completes the proof of the theorem subject to our additional assumption.

The general case now follows easily. It suffices to assume that there is $1 \leq r' <r$ such that $(\omega_1,\ldots,\omega_n)$ includes at least one instance of every symbol $1,\ldots,r'$, since we can always get to and from this situation by perturbing the indices $i=1,\ldots,r$ in some manner. Let $(A_1,\ldots,A_{r'})$ be the $r'$-tuple of matrices produced by the preceding argument. To extend this to a full $r$-tuple we simply let $A_{r'+1},\ldots,A_r$ be given by small scalar multiples of the matrices already defined. If the modulus of each scalar is strictly less than one then it is clearly that this yields an irreducible $r$-tuple $\mathcal{A}$ with $\varrho(\mathcal{A})=1$. If $\|\cdot\|_{\mathcal{A}}$ is a Barabanov norm and $z=(z_1,\ldots,z_n) \in \Omega^n_r$ with $z \nsim \omega$, then either $\|A_{z_n}\cdots A_{z_1}\|_{\mathcal{A}}=0$ if $z_i \leq r'$ for every $i$, or $\|A_{z_n}\cdots A_{z_1}\|_{\mathcal{A}} \leq \prod_{i=1}^n \|A_{z_i}\|_{\mathcal{A}} <1=\varrho(\mathcal{A})^n$ if otherwise. The proof is complete.



\subsection{Proof of Proposition \ref{R1P}}
We shall use the following simple characterisation of the rank one property:
\begin{lemma}\label{wedgio}
Let $\mathcal{A} \in \mathcal{O}_r(\mathbb{K}^d)$ be relatively product bounded. Then $\mathcal{A}$ has the rank one property if and only if $\varrho(\wedge^2\mathcal{A})<\varrho(\mathcal{A})^2$, where $\wedge^2\mathcal{A}$ is the $r$-tuple which consists of the second exterior powers of the matrices comprising $\mathcal{A}$.
\end{lemma}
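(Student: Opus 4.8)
The plan is first to reduce to the normalised case $\varrho(\mathcal{A})=1$. Replacing $\mathcal{A}$ by $\varrho(\mathcal{A})^{-1}\mathcal{A}$ leaves the limit semigroup $\mathcal{S}(\mathcal{A})$ unchanged, since the normalisation is built into its definition, and hence does not affect the rank one property; meanwhile $\wedge^2(\varrho(\mathcal{A})^{-1}\mathcal{A})=\varrho(\mathcal{A})^{-2}\wedge^2\mathcal{A}$, so that $\varrho(\wedge^2\mathcal{A})<\varrho(\mathcal{A})^2$ holds if and only if the rescaled tuple $\wedge^2(\varrho(\mathcal{A})^{-1}\mathcal{A})$ has joint spectral radius strictly less than $1$. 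We may thus assume $\varrho(\mathcal{A})=1$ and that $\mathcal{A}$ is product bounded, say $\vvv P\vvv\le C$ for every $P\in\bigcup_n\mathcal{A}_n$. Throughout I would use two elementary facts about the second exterior power: that $M\mapsto\wedge^2 M$ is continuous and multiplicative, so that $(\wedge^2\mathcal{A})_n=\{\wedge^2 P\colon P\in\mathcal{A}_n\}$, and that $\wedge^2 M=0$ precisely when $\mathrm{rank}\,M\le 1$.

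Second, I would fix a submultiplicative norm $\|\cdot\|$ on the matrices acting on $\wedge^2\KK^d$ and set
\[
s_n:=\max_{P\in\mathcal{A}_n}\|\wedge^2 P\|.
\]
(Intuitively $\|\wedge^2 M\|$ is comparable to the product of the two largest singular values of $M$, so $s_n$ decays exactly when the second singular value of long products becomes negligible beside the first.) Since $\wedge^2$ is multiplicative and $\|\cdot\|$ is submultiplicative, $(s_n)$ is submultiplicative, and since $\vvv P\vvv\le C$ it is bounded; hence $\varrho(\wedge^2\mathcal{A})=\lim_n s_n^{1/n}=\inf_n s_n^{1/n}\le 1$. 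The heart of the argument is the claim that $\mathcal{A}$ has the rank one property if and only if $s_n\to 0$. Granting this, I would finish with the standard dichotomy for a bounded submultiplicative sequence: if $s_n\to 0$ then $s_N<1$ for some $N$, whence $\inf_n s_n^{1/n}\le s_N^{1/N}<1$; conversely if $\lim_n s_n^{1/n}=c<1$ then $s_n<\gamma^n\to 0$ for any $c<\gamma<1$. Thus $s_n\to 0$ is equivalent to $\varrho(\wedge^2\mathcal{A})<1=\varrho(\mathcal{A})^2$, which is what is wanted.

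Third, I would establish the claim by relating failure of the rank one property to $\limsup_n s_n>0$. If some nonzero $S\in\mathcal{S}(\mathcal{A})$ has rank at least two, then $\wedge^2 S\neq 0$; writing $S$ as a limit of matrices $P_{n_k}\in\mathcal{A}_{n_k}$ with $n_k\to\infty$, which is possible because $S$ lies in $\overline{\bigcup_{n\ge m}\mathcal{A}_n}$ for every $m$, continuity of $\wedge^2$ gives $s_{n_k}\ge\|\wedge^2 P_{n_k}\|\to\|\wedge^2 S\|>0$, so $\limsup_n s_n>0$. For the converse, if $\limsup_n s_n>0$ then I would choose $P_{n_k}\in\mathcal{A}_{n_k}$ attaining $s_{n_k}$ along a subsequence with $n_k\to\infty$ and $s_{n_k}\to L>0$; product boundedness makes $(P_{n_k})$ bounded, so after passing to a further subsequence $P_{n_k}\to S$, and by the nested structure of the intersection defining $\mathcal{S}(\mathcal{A})$ one has $S\in\mathcal{S}(\mathcal{A})$ with $\|\wedge^2 S\|=L>0$, whence $S$ is a nonzero element of rank at least two. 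Therefore the rank one property fails exactly when $\limsup_n s_n>0$, that is, exactly when $s_n\not\to 0$, which proves the claim.

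The only genuinely delicate point is this extraction step: one must verify that a limit of products $P_{n_k}$ with lengths $n_k\to\infty$ really does lie in $\mathcal{S}(\mathcal{A})$, which I expect to follow cleanly from the nested decreasing structure of the sets $\overline{\bigcup_{n\ge m}\mathcal{A}_n}$ together with the compactness furnished by product boundedness. Everything else reduces to bookkeeping about the exterior power and to Fekete's lemma applied to the submultiplicative sequence $(s_n)$.
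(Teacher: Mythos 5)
Your proof is correct and takes essentially the same route as the paper: normalise so that $\varrho(\mathcal{A})=1$, detect rank through the vanishing of the second exterior power, and in both directions extract a limit point of products maximizing the $\wedge^2$-norm, which lies in $\mathcal{S}(\mathcal{A})$ by product boundedness and the nested intersections defining the limit semigroup. The only cosmetic differences are that you use the elementary fact $\wedge^2 M=0\iff\mathrm{rank}\,M\leq 1$ where the paper invokes the quantitative singular-value identity $\|\wedge^2A\|_{\wedge^2}=\vvv A\vvv\cdot\inf\{\vvv A-B\vvv\colon\mathrm{rank}\,B\leq 1\}$, and that you make the infimum characterisation of $\varrho(\wedge^2\mathcal{A})$ explicit via Fekete's lemma applied to $s_n$ where the paper simply cites \eqref{ZOMGBWF}.
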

\begin{proof}
By normalising $\mathcal{A}$ if necessary it clearly suffices to consider the case $\varrho(\mathcal{A})=1$. Let $\|\cdot\|_{\wedge^2}$ denote the standard norm on $\mathbb{K}^d\wedge\mathbb{K}^d$. Recall that for every $A \in \Mat_d(\mathbb{K})$ we have
\begin{equation}\label{Bh}\left\|\wedge^2 A\right\|_{\wedge^2} = \vvv A\vvv.\inf\{\vvv A-B\vvv\colon \mathrm{rank}\,B \leq 1\}\end{equation}
see e.g. \cite{Bhatia}. In particular $\varrho(\wedge^2\mathcal{A})$ is at most $1$. 

Suppose that $\varrho(\wedge^2\mathcal{A})<1$. Given any nonzero matrix $A \in \mathcal{S}(\mathcal{A})$, choose an increasing sequence of integers $(n_k)_{k=1}^\infty$ and a sequence of matrices $(A_{n_k})_{k=1}^\infty$ such that $A_{n_k} \in \mathcal{A}_{n_k}$ for every $k$, and $\lim_{k \to \infty} A_{n_k} = A$. Since $\varrho(\wedge^2\mathcal{A})<1$ we have $\|\wedge^2 A\|_{\wedge^2} = \lim_{k\to\infty}\|\wedge^2 A_{n_k}\|_{\wedge^2} = 0$. It follows from \eqref{Bh} that $A$ has rank equal to $1$ and we conclude that the rank one property holds.

Suppose conversely that $\varrho(\wedge^2\mathcal{A})=1$. It follows that we can choose a sequence $(A_n)_{n=1}^\infty$ with each $A_n \in \mathcal{A}_n$ such that $\|\wedge^2A_n\|_{\wedge^2} \geq 1$, since if for any $n$ this choice were not possible we would have $\varrho(\wedge^2\mathcal{A})<1$ by \eqref{ZOMGBWF}. Since $\mathcal{A}$ is product bounded we may take a subsequence $(n_k)_{k=1}^\infty$ and matrix $A \in \mathcal{S}(\mathcal{A})$ such that $\lim_{k \to \infty}A_{n_k} = A$. Since $\|\wedge^2 A_n\|_{\wedge^2} \geq 1$ for every $n$ we have $\|\wedge^2 A\|_{\wedge^2}\geq 1$. By \eqref{Bh} it follows that the rank of $A$ is at least two, and we conclude that the rank one property does not hold.
\end{proof}
The proof of Proposition \ref{R1P} now follows easily. Since the maps from $\mathcal{I}_r(\KK^d)$ into $\RR$ given by $\mathcal{A} \mapsto \varrho(\mathcal{A})$ and $\mathcal{A} \mapsto \varrho(\wedge^2\mathcal{A})$ are both continuous, the set of all $\mathcal{A}\in\mathcal{I}_r(\KK^d)$ such that $\varrho(\wedge^2\mathcal{A})<\varrho(\mathcal{A})^2$ is open.

To see that the rank one property holds for a dense subset of $\mathcal{I}_r(\mathbb{C}^2)$ we argue as follows. If $\mathcal{A}=(A_1,\ldots,A_r)$ meets the condition that each $A_i$ has two distinct eigenvalues which are unequal in modulus, then
\[\varrho(\wedge^2\mathcal{A}) = \max_{1\leq i \leq r} |\det A_i| < \max_{1\leq i \leq r}\rho(A_i)^2 \leq \varrho(\mathcal{A})^2.\]
Since this condition is easily seen to be hold for a dense open subset of $\mathcal{O}_r(\CC^2)$, it follows that it holds for a dense open subset of $\mathcal{I}_r(\CC^d)$ also, which implies that the rank one property holds for this set.

\subsection{Proof of Theorem \ref{UBN}}

Let $\mathcal{A}=(A_1,\ldots,A_r) \in \mathcal{O}_r(\mathbb{K}^d)$ and suppose that $\mathcal{A}$ satisfies the unbounded agreements property. Without loss of generality we normalise $\mathcal{A}$ so as to obtain $\varrho(\mathcal{A})=1$. Let $\|\cdot\|_{B1}$ and $\|\cdot\|_{B2}$ be Barabanov norms for $\mathcal{A}$ which are not proportional to each other. Note that the existence of a Barabanov norm implies that $\mathcal{A}$ is product bounded. We will establish Theorem \ref{UBN} by showing that under these hypotheses the rank one property cannot hold.

Rescaling one of the two norms if necessary, there exists a real number $\lambda>1$ such that
\[\sup\{\|v\|_{B1}\colon \|v\|_{B2} = 1 \} = \sup\{\|v\|_{B2}\colon \|v\|_{B1}=1\} = \lambda.\]
Let us define subsets $X_1$, $X_2$ of $\mathbb{K}^d$ by
\[X_1 = \left\{v \in \mathbb{K}^d \colon \|v\|_{B1} = \lambda \|v\|_{B2} = \lambda\right\},\]
\[X_2 = \left\{v \in \mathbb{K}^d \colon \|v\|_{B2} = \lambda \|v\|_{B1} = \lambda\right\}.\]
Clearly $X_1$ and $X_2$ are nonempty and compact, and satisfy $X_1 \cap X_2 =\emptyset$. Note also that if $v_1 \in X_1$ and $v_2\in X_2$ then $v_1$ and $v_2$ are linearly independent.

To begin the proof we show that there exist sequences $i_1,i_2 \colon \NN \to \{1,\ldots,r\}$ and vectors $u_1 \in X_1$ and $u_2 \in X_2$ such that for every natural number $n$ we have $A_{i_m(n)}\cdots A_{i_m(1)}u_m \in X_m$ for $m=1,2$. Let $u_1$ and $u_2$ be arbitrary elements of $X_1$ and $X_2$ respectively. Since $\|\cdot\|_{B1}$ is a Barabanov norm there exists $i_1(1)\in \{1,\ldots,r\}$ such that $\|A_{i_1(1)}v_1\|_{B1}=\|v_1\|_{B1}=\lambda$ and consequently
\[\|A_{i_1(1)}v_1\|_{B1}  = \lambda \|v_1\|_{B2} = \lambda\max_{1\leq j \leq r} \|A_jv_1\|_{B2} \geq \lambda\|A_{i_1(1)}v_1\|_{B2} \geq  \|A_{i_1(1)}v_1\|_{B1}\]
which implies that $A_{i_1(1)}u_1 \in X_1$. Applying this procedure with the vector $A_{i_1(1)}u_1$ in place of $u_1$ allows us to define $i_1(2)$ with the property that $A_{i_1(2)}A_{i_1(1)}u_1 \in X_1$. Proceeding inductively in this manner we may thus construct the sequence $i_1$. The construction of the sequence $i_2$ may be undertaken in precisely the same fashion.

We next claim that for every $\ell \in \NN$ there exist a function $j_\ell \colon \{1,\ldots,\ell\} \to \{1,\ldots,r\}$ and two vectors $v_{\ell,1} \in X_1$, $v_{\ell,2}\in X_2$ such that for $1 \leq k \leq \ell$ and $m=1,2$ one has $A_{j_\ell(k)}\cdots A_{j_{\ell}(1)}v_{\ell,m} \in X_m$. To see this, let $i_1,i_2$ and $u_1,u_2$ be the sequences and vectors defined above. Since $\left\|A_{i_m(n)}\cdots A_{i_m(1)}u_m\right\|_{Bm}=\lambda$ for each $n \in \NN$ and for $m=1,2$, we conclude that neither of the two sequences of matrices $(A_{i_m(n)}\cdots A_{i_m(1)})_{n=1}^\infty$ converges to zero in the limit $n \to \infty$. Let $\ell \in \NN$. By the unbounded agreements property it follows that there exist $n_1,n_2 \in \NN$ such that $i_1(n_1+k)=i_2(n_2+k)$ throughout the range $1 \leq k \leq \ell$. Taking $j_{\ell}(k):=i_1(n_1+k)$ for $1 \leq k \leq \ell$ and setting $v_{\ell,m}=A_{i_m(n_m)}\cdots A_{i_m(1)}u_m \in X_m$ for $m=1,2$ proves the claim.

We may now complete the proof. For each $\ell \in \NN$ and $m=1,2$ we have $\left\|A_{j_\ell(\ell)}\cdots A_{j_\ell(1)}v_{\ell,m}\right\|_{Bm}=\|v_{\ell,m}\|_{Bm}$ and therefore $\left\|A_{j_\ell(\ell)}\cdots A_{j_\ell(1)}\right\|_{Bm}=1$. It follows that we may choose a subsequence $\left(\ell_n\right)_{n=1}^\infty$ such that the sequence of matrices $(B_n)_{n=1}^\infty$ given by $B_n:=A_{j_{\ell_n}(\ell_n)}\cdots A_{j_{\ell_n}(1)}$ converges to a nonzero matrix $B \in \Mat_d(\KK)$. By definition we have $B \in \mathcal{S}(\mathcal{A})$. Since each $X_m$ is compact, by taking further subsequences if necessary we may assume that $\lim_{n \to \infty}v_{\ell_n,m} = v_m \in X_m$ for $m=1,2$. For each $n \in \NN$ and $m=1,2$ we have $B_nv_{\ell_n,m} \in X_m$, and it follows from this that $B v_1 \in X_1$, $Bv_2 \in X_2$. But this implies that $Bv_1$ and $Bv_2$ are linearly independent, and we have obtained $B \in \mathcal{S}(\mathcal{A})$ with $\mathrm{rank}\,B \geq 2$. Thus $\mathcal{A}$ lacks the rank one property and the proof is complete.

\section{Connections with ergodic theory}\label{purple-aki}

The proof of Theorem \ref{UBN} is suggested by a lemma of T. Bousch in ergodic theory \cite[Lemme C]{B1}, which we shall briefly describe. Define a map $T \colon \RR / \ZZ \to \RR / \ZZ$ by $T(x)=2x (\mod 1)$ and let $f \colon \RR / \ZZ \to \RR$ be Lipschitz continuous. If we then define
\[\beta(f)=\inf_{n \in \NN} \sup_{x \in \RR / \ZZ}\frac{1}{n} \sum_{j=0}^{n-1} f(T^jx),\]
then Bousch's Lemme C gives criteria under which the functional equation
\begin{equation}\beta(f)+g(x)=\label{bush}\max_{T(y)=x} \left[f(y)+g(y)\right] \end{equation}
admits at most one continuous solution $g$ up to the addition of a real constant. The similarity between the functional equations \eqref{Bnorm} and \eqref{bush} was previously remarked on by Bousch in the manuscript \cite{Brect}. When more than one solution to \eqref{bush} exists, moreover, the set of all sufficiently regular solutions is an equicontinuous family \cite[Lemma 7.6]{CLOS}, a result suggestive of \cite[Theorem 4.1]{Wirth1}.

The idea of Theorem \ref{FIST} was suggested to the author by a theorem of Contreras, Lopes and Thieullen which is closely related to the work of Bousch described above \cite[Theorem 8]{CLT}. However, the proof of Theorem \ref{FIST} in its final form has no direct connection with the argument in \cite{CLT}. Connections between the joint spectral radius and optimisation problems in ergodic theory are also investigated by the author in \cite{MQBWF,MGBWF}.


\bibliographystyle{amsplain.bst}
\bibliography{Barrow}

\def\cprime{$'$}
\providecommand{\bysame}{\leavevmode\hbox to3em{\hrulefill}\thinspace}
\providecommand{\MR}{\relax\ifhmode\unskip\space\fi MR }
\providecommand{\MRhref}[2]{%
  \href{http://www.ams.org/mathscinet-getitem?mr=#1}{#2}
}
\providecommand{\href}[2]{#2}
\begin{thebibliography}{10}

\bibitem{Ba}
N.~E. Barabanov, \emph{On the {L}yapunov exponent of discrete inclusions. {I}},
  Automat. Remote Control \textbf{49} (1988), no.~2, 152--157. \MR{MR940263
  (89e:34025)}

\bibitem{BW}
Marc~A. Berger and Yang Wang, \emph{Bounded semigroups of matrices}, Linear
  Algebra Appl. \textbf{166} (1992), 21--27. \MR{MR1152485 (92m:15012)}

\bibitem{Bhatia}
Rajendra Bhatia, \emph{Perturbation bounds for matrix eigenvalues}, Classics in
  Applied Mathematics, vol.~53, Society for Industrial and Applied Mathematics
  (SIAM), Philadelphia, PA, 2007, Reprint of the 1987 original. \MR{MR2325304
  (2008b:15071)}

\bibitem{BN}
Vincent~D. Blondel and Yurii Nesterov, \emph{Computationally efficient
  approximations of the joint spectral radius}, SIAM J. Matrix Anal. Appl.
  \textbf{27} (2005), no.~1, 256--272 (electronic). \MR{MR2176820
  (2006k:15027)}

\bibitem{BTV}
Vincent~D. Blondel, Jacques Theys, and Alexander~A. Vladimirov, \emph{An
  elementary counterexample to the finiteness conjecture}, SIAM J. Matrix Anal.
  Appl. \textbf{24} (2003), no.~4, 963--970 (electronic). \MR{MR2003315
  (2004g:15010)}

\bibitem{B1}
Thierry Bousch, \emph{Le poisson n'a pas d'ar\^etes}, Ann. Inst. H. Poincar\'e
  Probab. Statist. \textbf{36} (2000), no.~4, 489--508. \MR{MR1785392
  (2001i:37005)}

\bibitem{Brect}
\bysame, \emph{Le lemme de {M}a\~n\'e-{C}onze-{G}uivarc'h pour les syst\`emes
  amphidynamiques rectifiables}, manuscript, 2007.

\bibitem{BM}
Thierry Bousch and Jean Mairesse, \emph{Asymptotic height optimization for
  topical {IFS}, {T}etris heaps, and the finiteness conjecture}, J. Amer. Math.
  Soc. \textbf{15} (2002), no.~1, 77--111 (electronic). \MR{MR1862798
  (2002j:49008)}

\bibitem{CLT}
G.~Contreras, A.~O. Lopes, and Ph. Thieullen, \emph{Lyapunov minimizing
  measures for expanding maps of the circle}, Ergodic Theory Dynam. Systems
  \textbf{21} (2001), no.~5, 1379--1409. \MR{MR1855838 (2002i:37038)}

\bibitem{CLOS}
Gonzalo Contreras, Artur~O. Lopes, Elismar~R. Oliveira, and Daniel Smania,
  \emph{Piecewise analytic subactions for analytic dynamics}, arXiv preprint
  0904.3516, 2009.

\bibitem{DL0}
Ingrid Daubechies and Jeffrey~C. Lagarias, \emph{Two-scale difference
  equations. {II}. {L}ocal regularity, infinite products of matrices and
  fractals}, SIAM J. Math. Anal. \textbf{23} (1992), no.~4, 1031--1079.
  \MR{MR1166574 (93g:39001)}

\bibitem{Gr}
Gustaf Gripenberg, \emph{Computing the joint spectral radius}, Linear Algebra
  Appl. \textbf{234} (1996), 43--60. \MR{MR1368770 (97c:15043)}

\bibitem{GWZ}
N.~Guglielmi, F.~Wirth, and M.~Zennaro, \emph{Complex polytope extremality
  results for families of matrices}, SIAM J. Matrix Anal. Appl. \textbf{27}
  (2005), no.~3, 721--743 (electronic). \MR{MR2208331 (2007b:93106)}

\bibitem{GZ}
N.~Guglielmi and M.~Zennaro, \emph{On the zero-stability of variable stepsize
  multistep methods: the spectral radius approach}, Numer. Math. \textbf{88}
  (2001), no.~3, 445--458. \MR{MR1835466 (2002d:65066)}

\bibitem{Gu}
Leonid Gurvits, \emph{Stability of discrete linear inclusion}, Linear Algebra
  Appl. \textbf{231} (1995), 47--85. \MR{MR1361100 (96i:93056)}

\bibitem{Gu2}
Leonid Gurvits, \emph{Stability of linear inclusions - part 2}, {NECI} research
  report TR96-173, 1996.

\bibitem{BJ}
Rapha{\"e}l~M. Jungers and Vincent~D. Blondel, \emph{On the finiteness property
  for rational matrices}, Linear Algebra Appl. \textbf{428} (2008), no.~10,
  2283--2295. \MR{MR2405245 (2009e:15029)}

\bibitem{Koz}
V.~S. Kozyakin, \emph{Algebraic unsolvability of a problem on the absolute
  stability of desynchronized systems}, Automat. Remote Control \textbf{51}
  (1990), no.~6, 754--759. \MR{MR1071607 (91e:93065)}

\bibitem{Koz3}
\bysame, \emph{A dynamical systems construction of a counterexample to the
  finiteness conjecture}, {P}roceedings of the 44th {IEEE} {C}onference on
  {D}ecision and {C}ontrol, and the {E}uropean {C}ontrol {C}onference 2005
  (Seville, Spain), December 2005, pp.~2338--2343.

\bibitem{Koz2}
\bysame, \emph{Iterative building of {B}arabanov norms and computation of the
  joint spectral radius for matrix sets}, arXiv preprint 0810.2154v2, 2008.

\bibitem{LW}
Jeffrey~C. Lagarias and Yang Wang, \emph{The finiteness conjecture for the
  generalized spectral radius of a set of matrices}, Linear Algebra Appl.
  \textbf{214} (1995), 17--42. \MR{MR1311628 (95k:15038)}

\bibitem{Mae}
Mohsen Maesumi, \emph{An efficient lower bound for the generalized spectral
  radius of a set of matrices}, Linear Algebra Appl. \textbf{240} (1996), 1--7.
  \MR{MR1387282 (97b:15027)}

\bibitem{Mae2}
\bysame, \emph{Calculating joint spectral radius of matrices and {H}\"older
  exponent of wavelets}, Approximation theory {IX}, {V}ol. 2 ({N}ashville,
  {TN}, 1998), Innov. Appl. Math., Vanderbilt Univ. Press, Nashville, TN, 1998,
  pp.~205--212. \MR{MR1744409}

\bibitem{MO}
Bruce~E. Moision, Alon Orlitsky, and Paul~H. Siegel, \emph{On codes that avoid
  specified differences}, IEEE Trans. Inform. Theory \textbf{47} (2001), no.~1,
  433--442. \MR{MR1820392 (2001k:94084)}

\bibitem{MGBWF}
Ian~David Morris, \emph{The generalized {B}erger-{W}ang formula and the
  spectral radius of linear cocycles}, arXiv preprint 0906.2915, 2009.

\bibitem{MQBWF}
\bysame, \emph{A rapidly-converging lower bound for the joint spectral radius
  via multiplicative ergodic theory}, arXiv preprint 0906.0260, 2009.

\bibitem{Prot}
V.~Yu. Protasov, \emph{The joint spectral radius and invariant sets of linear
  operators}, Fundam. Prikl. Mat. \textbf{2} (1996), no.~1, 205--231.
  \MR{MR1789006 (2002d:47006)}

\bibitem{RS}
Gian-Carlo Rota and Gilbert Strang, \emph{A note on the joint spectral radius},
  Nederl. Akad. Wetensch. Proc. Ser. A 63 = Indag. Math. \textbf{22} (1960),
  379--381. \MR{MR0147922 (26 \#5434)}

\bibitem{BT1}
John~N. Tsitsiklis and Vincent~D. Blondel, \emph{The {L}yapunov exponent and
  joint spectral radius of pairs of matrices are hard---when not
  impossible---to compute and to approximate}, Math. Control Signals Systems
  \textbf{10} (1997), no.~1, 31--40. \MR{MR1462278 (99h:65238a)}

\bibitem{Wirth1}
Fabian Wirth, \emph{The generalized spectral radius and extremal norms}, Linear
  Algebra Appl. \textbf{342} (2002), 17--40. \MR{MR1873424 (2003g:15025)}

\bibitem{Wirth2}
\bysame, \emph{The generalized spectral radius is strictly increasing}, Linear
  Algebra Appl. \textbf{395} (2005), 141--153. \MR{MR2112880 (2005m:15027)}

\end{thebibliography}
\end{document}